\theoremstyle{definition}
\newtheorem{Def}{Definition}[section]
\newtheorem{rem}[Def]{Remark}
\newtheorem{ex}[Def]{Example}
\newtheorem*{Conv}{Conventions}
\theoremstyle{plain}
\newtheorem{Prop}[Def]{Proposition}
\newtheorem{Lem}[Def]{Lemma}
\newtheorem{Thm}[Def]{Theorem}
\newtheorem{Cor}[Def]{Corollary}
\newtheorem*{MThm1}{Main Theorem 1}
\newtheorem*{MThm2}{Main Theorem 2}
\def\sub#1{\langle{#1}\rangle_{\rm sub}}
\def\quot#1{\langle{#1}\rangle_{\rm quot}}
\def\ext#1{\langle{#1}\rangle_{\rm ext}}
\def\Serre#1{\langle{#1}\rangle_{\rm Serre}}
\def\loc#1{\langle{#1}\rangle_{\rm loc}}
\def\mbZ{\mathbb{Z}}
\def\mcA{\mathcal{A}}
\def\mcO{\mathcal{O}}
\def\mcS{\mathcal{S}}
\def\mcX{\mathcal{X}}
\def\mcY{\mathcal{Y}}
\def\mcZ{\mathcal{Z}}
\def\mfm{\mathfrak{m}}
\def\mfp{\mathfrak{p}}
\def\mfq{\mathfrak{q}}
\def\Mod{\mathop{\operator@font Mod}\nolimits}
\def\mod{\mathop{\operator@font mod}\nolimits}
\def\noeth{\mathop{\operator@font noeth}\nolimits}
\def\Hom{\mathop{\operator@font Hom}\nolimits}
\def\Ker{\mathop{\operator@font Ker}\nolimits}
\def\Im{\mathop{\operator@font Im}\nolimits}
\def\Cok{\mathop{\operator@font Cok}\nolimits}
\def\Spec{\mathop{\operator@font Spec}\nolimits}
\def\Supp{\mathop{\operator@font Supp}\nolimits}
\def\Ass{\mathop{\operator@font Ass}\nolimits}
\def\ASpec{\mathop{\operator@font ASpec}\nolimits}
\def\AISpec{\mathop{\operator@font ASpec_{0}}\nolimits}
\def\ASupp{\mathop{\operator@font ASupp}\nolimits}
\def\AAss{\mathop{\operator@font AAss}\nolimits}
\def\Zg{\mathop{\operator@font Zg}\nolimits}
\def\Ann{\mathop{\operator@font Ann}\nolimits}
\title[Classifying Serre subcategories via atom spectrum]{Classifying Serre subcategories via atom spectrum}
\author{Ryo Kanda}
\address{Graduate School of Mathematics, Nagoya University, Furo-cho, Chikusa-ku, Nagoya-shi, Aichi-ken, 464-8602, Japan}
\email{kanda.ryo@a.mbox.nagoya-u.ac.jp}
\subjclass[2010]{18E10 (Primary), 18E15, 16D90, 13C60 (Secondary)}
\keywords{Serre subcategory; Atom spectrum; Monoform object; Localizing subcategory; Ziegler spectrum}
\begin{document}

\begin{abstract}
	In this paper, we introduce the atom spectrum of an abelian category as a topological space consisting of all the equivalence classes of monoform objects. In terms of the atom spectrum, we give a classification of Serre subcategories of an arbitrary noetherian abelian category. Moreover we show that the atom spectrum of a locally noetherian Grothendieck category is homeomorphic to its Ziegler spectrum.
\end{abstract}

\maketitle

\section{Introduction}
\label{sec:intro}

Classification of localizing subcategories and Serre subcategories is quite an active subject widely studied by a number of authors (see, for example, \cite{Gabriel}, \cite{Herzog}, \cite{Krause}, \cite{Takahashi}, and \cite{BensonIyengarKrause}). The prototype of such classifications is a result given by Gabriel \cite{Gabriel}: for a right noetherian ring $R$, denote by $\Mod R$ the category of right $R$-modules and by $\mod R$ the category of finitely generated right $R$-modules.

\begin{Thm}[Gabriel \cite{Gabriel}]
	Let $R$ be a commutative noetherian ring. Then there exist one-to-one correspondences between localizing subcategories of $\Mod R$, Serre subcategories of $\mod R$, and specialization-closed subsets of $\Spec R$.
\end{Thm}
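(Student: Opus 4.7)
The plan is to route the three-way bijection through two natural assignments: a subset $\Phi \subseteq \Spec R$ gets sent to the class of modules supported in $\Phi$, while a subcategory $\mathcal{C}$ gets sent to the set $\{\mfp : R/\mfp \in \mathcal{C}\}$. Concretely, for a specialization-closed subset $\Phi \subseteq \Spec R$ I would form
\[
	\mathcal{S}(\Phi) = \{M \in \mod R : \Supp M \subseteq \Phi\}, \qquad \mathcal{L}(\Phi) = \{M \in \Mod R : \Supp M \subseteq \Phi\},
\]
and for a Serre or localizing subcategory $\mathcal{C}$ I would set $\Phi(\mathcal{C}) = \{\mfp \in \Spec R : R/\mfp \in \mathcal{C}\}$.

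The formal verifications are routine. Support is additive on short exact sequences and commutes with arbitrary direct sums, so $\mathcal{S}(\Phi)$ is Serre and $\mathcal{L}(\Phi)$ is localizing. For specialization-closedness of $\Phi(\mathcal{C})$, if $\mfp \in \Phi(\mathcal{C})$ and $\mfp \subseteq \mfq$ then the natural surjection $R/\mfp \twoheadrightarrow R/\mfq$ together with closure under quotients puts $\mfq$ in $\Phi(\mathcal{C})$. The identity $\Phi(\mathcal{S}(\Phi)) = \Phi$ then follows immediately from $\Supp(R/\mfp) = V(\mfp)$ combined with the specialization-closedness of $\Phi$.

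The heart of the argument will be to prove $\mathcal{S}(\Phi(\mathcal{S})) = \mathcal{S}$ for every Serre subcategory $\mathcal{S} \subseteq \mod R$. For the inclusion $\mathcal{S} \subseteq \mathcal{S}(\Phi(\mathcal{S}))$ I would pick, for any $M \in \mathcal{S}$ and any $\mfp \in \Supp M$, an associated prime $\mfp' \in \Ass M$ with $\mfp' \subseteq \mfp$; the embedding $R/\mfp' \hookrightarrow M$ forces $R/\mfp' \in \mathcal{S}$, so $\mfp' \in \Phi(\mathcal{S})$, and the specialization-closedness of the latter lifts this to $\mfp \in \Phi(\mathcal{S})$. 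For the reverse inclusion I would invoke the classical prime filtration: any finitely generated $M$ admits a chain $0 = M_0 \subset \cdots \subset M_n = M$ with each $M_i / M_{i-1} \cong R/\mfp_i$ and each $\mfp_i \in \Supp M$; when $\Supp M \subseteq \Phi(\mathcal{S})$, every $R/\mfp_i$ lies in $\mathcal{S}$, and closure under extensions forces $M \in \mathcal{S}$. I expect this interplay between associated primes and the prime filtration to be the main technical step.

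For the bijection between localizing subcategories of $\Mod R$ and Serre subcategories of $\mod R$ I would use $\mathcal{L} \mapsto \mathcal{L} \cap \mod R$ and $\mathcal{S} \mapsto \loc{\mathcal{S}}$. Since $R$ is noetherian, every $R$-module is the filtered colimit of its finitely generated submodules, and submodules of finitely generated modules are themselves finitely generated; this lets me describe $\loc{\mathcal{S}}$ as the class of $M \in \Mod R$ all of whose finitely generated submodules lie in $\mathcal{S}$, and allows me to recover $\mathcal{L}$ from $\mathcal{L} \cap \mod R$. Composing with the previous correspondence then yields the full three-way bijection. The noetherian hypothesis is essential in both steps: it underpins the filtered-colimit reconstruction here and the prime filtration above.
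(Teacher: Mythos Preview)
Your argument is correct and is essentially the classical direct proof of Gabriel's theorem, but it is \emph{not} the route the paper takes. The paper obtains this result as Corollary~\ref{Cor:commringcorresp}, deducing it in one line from its general machinery: it first proves Main Theorem~1 (Theorem~\ref{Thm:main}) classifying Serre subcategories of an arbitrary noetherian abelian category via open subsets of the atom spectrum, then upgrades this to localizing subcategories of a locally noetherian Grothendieck category (Theorem~\ref{Thm:locSerreatom} and Corollary~\ref{Cor:ringcorresp}), and finally identifies $\ASpec R$ with $\Spec R$ and its open subsets with specialization-closed subsets when $R$ is commutative noetherian (Proposition~\ref{Prop:commatom}).

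The two proofs are closely related under the hood. Your prime filtration is exactly the commutative-ring incarnation of the paper's monoform filtration (Theorem~\ref{Thm:monoformfiltration}), and your extension argument showing $\mcS(\Phi(\mcS))\subset\mcS$ is the specialization of the key Lemma~\ref{Lem:suppSerre}. What you gain is a short, self-contained argument using only standard commutative algebra (associated primes, prime filtrations), with no need to introduce monoform objects or atoms. What the paper gains is generality: its proof applies verbatim to any noetherian abelian category, including noncommutative $\mod R$, where there is no prime spectrum to work with directly; Gabriel's theorem then drops out as a special case once the atom spectrum is identified with $\Spec R$.
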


In the paper \cite{Gabriel}, Gabriel used the space of isomorphism classes of indecomposable injective modules in order to show the above theorem. On the other hand, Herzog \cite{Herzog} constructed one-to-one correspondences between localizing subcategories of $\Mod R$, Serre subcategories of $\mod R$, and open subsets of the Ziegler spectrum of $\Mod R$. The Ziegler spectrum was originally introduced by Ziegler \cite{Ziegler}. The Ziegler spectrum of a locally coherent Grothendieck category is a topological space consisting of all the isomorphism classes of indecomposable injective objects (Definition \ref{Def:Zieglerspectrum}). The correspondence between the Ziegler spectrum and Serre subcategories is also discussed in \cite{Krause}.

The aim of this paper is to give a classification of Serre subcategories of an arbitrary noetherian abelian category in terms of the atom spectrum (Definition \ref{Def:atomspectrum}), which is a topological space consisting of all the equivalence classes of monoform objects (Definition \ref{Def:monoform}). Monoform objects and their equivalence relation are investigated in the context of noncommutative ring theory (see, for example, \cite{Storrer}, \cite{LambekMichler}, and \cite{Dlab}). Note that these notions are called by various names. For example, in \cite{Storrer}, monoform objects are called strongly uniform objects. For more details, we refer the reader to \cite{Reyes}.

Now we state our first main result in this paper. We denote by $\ASupp M$ the atom support (Definition \ref{Def:atomsupport}) of an object $M$ in an abelian category $\mcA$. 

\begin{MThm1}[Theorem \ref{Thm:main}]
	Let $\mcA$ be a noetherian abelian category. Then the map $\mcX\mapsto\bigcup_{M\in\mcX}\ASupp M$ gives a one-to-one correspondence between Serre subcategories of $\mcA$ and open subsets of the atom spectrum of $\mcA$. The inverse is given by $\Phi\mapsto\{M\in\mcA\ |\ \ASupp M\subset\Phi\}$.
\end{MThm1}

One of the advantages of dealing with the atom spectrum is that in Main Theorem 1, we do not have to assume that $\mcA$ has enough injectives in contrary to Herzog's correspondence. In fact, we do not use injective objects in order to prove Main Theorem 1.

Now let $\mcA$ be a locally noetherian Grothendieck category. It is shown in \cite{Storrer} that, as a set, the atom spectrum coincides with the Ziegler spectrum. Our second main result shows that they are the same as topological spaces. Moreover we have the following classification of subcategories: denote by $\noeth\mcA$ the subcategory of noetherian objects in $\mcA$.

\begin{MThm2}[Theorem \ref{Thm:atomspecZg} and Theorem \ref{Thm:locSerreatom}]
	Let $\mcA$ be a locally noetherian Grothendieck category.
	\begin{enumerate}
		\item The atom spectrum of $\mcA$ is homeomorphic to the Ziegler spectrum of $\mcA$.
		\item There exist one-to-one correspondences between localizing subcategories of $\mcA$, Serre subcategories of $\noeth\mcA$, and open subsets of the atom spectrum of $\mcA$.
	\end{enumerate}
\end{MThm2}

In the case where $\mcA=\mod R$ for a noetherian ring $R$, each equivalence class of monoform objects is represented by $R/\mfp$, where $\mfp$ is a comonoform right ideal (Definition \ref{Def:comonoformrightideal}) of $R$. If $R$ is commutative, comonoform right ideals of $R$ are nothing but prime ideals of $R$. Thus Main Theorem 2 (1) can be interpreted as a noncommutative generalization of the correspondence given by Matlis \cite{Matlis}. Moreover Gabriel's theorem is recovered from our Main Theorem 2 (2). These facts suggest that the atom spectrum is a reasonable noncommutative generalization of the prime spectrum of a commutative noetherian ring.

\begin{Conv}
	Throughout this paper, a {\em subcategory} means a full subcategory which contains zero objects and is closed under isomorphisms. Unless otherwise specified, $\mcA$ is an abelian category, and an {\em object} means an object in $\mcA$. We denote by $R$ an associative right noetherian ring with an identity element. A {\em module} means a right $R$-module. We denote the category of right $R$-modules by $\Mod R$ and the category of finitely generated right $R$-modules by $\mod R$.
\end{Conv}

\section{Monoform objects and their relationships to Serre subcategories}
\label{sec:Serre}

Throughout this section, let $\mcA$ be an abelian category. In this section, we recall some properties of monoform objects, which are mainly stated in \cite{Storrer}. First of all, we define monoform objects in $\mcA$, which play central roles throughout this paper.

\begin{Def}\label{Def:monoform}
	Let $\mcA$ be an abelian category. Call a nonzero object $H$ in $\mcA$ a {\em monoform object} if for any nonzero subobject $N$ of $H$, there exists no common nonzero subobject of $H$ and $H/N$, that is, there does not exist a nonzero subobject of $H$ which is isomorphic to a subobject of $H/N$.
\end{Def}

Note that any simple object is a monoform object.

The following fact about monoform objects is frequently used later.

\begin{Prop}\label{Prop:monoformsub}
	Any nonzero subobject of a monoform object is also a monoform object.
\end{Prop}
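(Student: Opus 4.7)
The plan is to prove the contrapositive/direct statement by unwinding the definition and transporting any hypothetical violation back to $H$. Let $H$ be monoform and let $L$ be a nonzero subobject of $H$. To show $L$ is monoform, I take an arbitrary nonzero subobject $N$ of $L$ and aim to show that $L$ and $L/N$ admit no common nonzero subobject. Suppose, toward a contradiction, that there is a nonzero subobject $K$ of $L$ together with a monomorphism $K \hookrightarrow L/N$.

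The key move is to lift this configuration from $L$ into $H$. Since $N \subset L \subset H$, all three are subobjects of $H$, and in any abelian category the natural morphism $L/N \to H/N$ induced by the inclusion $L \hookrightarrow H$ is a monomorphism (its kernel is $(L \cap N)/N = N/N = 0$). Composing $K \hookrightarrow L/N \hookrightarrow H/N$ yields a monomorphism from $K$ into $H/N$. On the other hand, $K$ is already a nonzero subobject of $H$ via $K \subset L \subset H$. Thus $K$ is a common nonzero subobject of $H$ and $H/N$, where $N$ is nonzero in $H$. This contradicts the monoformness of $H$, so no such $K$ exists and $L$ is monoform.

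There is no real obstacle here: the argument is formal, and the only mild subtlety is justifying that $L/N \to H/N$ is a monomorphism, which follows from the snake/nine lemma or a direct kernel computation in any abelian category. Everything else is transport of a witness along inclusions, so the proof fits comfortably in a few lines once the abelian-category fact about quotients is cited.
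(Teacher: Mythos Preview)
Your proof is correct and follows essentially the same route as the paper: assume $L$ fails to be monoform with a witness $K\subset L$ embedding into $L/N$, then use $L/N\hookrightarrow H/N$ to contradict monoformness of $H$. The only difference is that you spell out why $L/N\to H/N$ is a monomorphism, which the paper leaves implicit.
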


\begin{proof}
	Let $H$ be a monoform object and $L$ be a nonzero subobject of $H$. If $L$ is not monoform, there exists a nonzero subobject $N$ of $L$ such that $L$ and $L/N$ have a common nonzero subobject. It is a common nonzero subobject of $H$ and $H/N$. This is a contradiction. Therefore $L$ is a monoform object.
\end{proof}

We use the following notation about subcategories of $\mcA$.

\begin{Def}
	Let $\mcX$ and $\mcY$ be subcategories of $\mcA$.
	\begin{enumerate}
		\item Set the subcategory $\sub{\mcX}$ of $\mcA$ by
		\begin{eqnarray*}
			\sub{\mcX}=\{M\in\mcA\ |\ M \text{ is a subobject of an object in } \mcX\}.
		\end{eqnarray*}
		In the case where $\sub{\mcX}=\mcX$, we say that $\mcX$ is {\em closed under subobjects}.
		\item Set the subcategory $\quot{\mcX}$ of $\mcA$ by
		\begin{eqnarray*}
			\quot{\mcX}=\{M\in\mcA\ |\ M \text{ is a quotient object of an object in } \mcX\}.
		\end{eqnarray*}
		In the case where $\quot{\mcX}=\mcX$, we say that $\mcX$ is {\em closed under quotient objects}.
		\item Set the subcategory $\mcX * \mcY$ of $\mcA$ by
		\begin{eqnarray*}
			\mcX * \mcY=\{M\in\mcA\!\!\!&|&\!\!\!\text{there exists an exact sequence }\\
			&&\!\!\!0\to L\to M\to N\to 0\\
			&&\!\!\!\text{with }L\in \mcX\text{ and }N\in \mcY\}.
		\end{eqnarray*}
		In the case where $\mcX * \mcX=\mcX$, we say that $\mcX$ is {\em closed under extensions}.
		\item Set $\mcX^{0}=\{\text{zero objects}\}$ and $\mcX^{n+1}=\mcX^{n}*\mcX$ for $n\in \mbZ_{\geq 0}$.
		Set the subcategory $\ext{\mcX}$ of $\mcA$ by
		\begin{eqnarray*}
			\ext{\mcX}=\bigcup_{n\geq 0}\mcX^{n}.
		\end{eqnarray*}
		\item $\mcX$ is called a {\em Serre subcategory} of $\mcA$ if $\mcX$ is closed under subobjects, quotient objects, and extensions. Let $\Serre{\mcX}$ denote the smallest Serre subcategory which contains $\mcX$.
		\item $\mcX$ is called a {\em localizing subcategory} of $\mcA$ if $\mcX$ is a Serre subcategory which is {\em closed under arbitrary direct sums}, that is, for any family $\{M_{\lambda}\}_{\lambda\in\Lambda}$ of objects in $\mcX$, its direct sum $\bigoplus_{\lambda\in\Lambda}M_{\lambda}$ also belongs to $\mcX$ if it exists in $\mcA$. Let $\loc{\mcX}$ denote the smallest localizing subcategory which contains $\mcX$.
	\end{enumerate}
\end{Def}

In order to state the relationship between monoform objects and Serre subcategories, we recall some properties about subcategories.

\begin{Prop}\label{Prop:subcats}
	Let $\mcX$, $\mcY$, and $\mcZ$ be subcategories of $\mcA$.
	\begin{enumerate}
		\item $\quot{\sub{\mcX}}=\sub{\quot{\mcX}}$.
		\item $(\mcX*\mcY)*\mcZ=\mcX*(\mcY*\mcZ)$.
		\item $\ext{\mcX}$ is the smallest extension-closed subcategory which contains $\mcX$.
		\item $\sub{\mcX*\mcY}\subset\sub{\mcX}*\sub{\mcY}$,
		$\quot{\mcX*\mcY}\subset\quot{\mcX}*\quot{\mcY}$.
		\item $\sub{\ext{\mcX}}\subset\ext{\sub{\mcX}}$,
		$\quot{\ext{\mcX}}\subset\ext{\quot{\mcX}}$.
		\item $\ext{\quot{\sub{\mcX}}}=\Serre{\mcX}$.
	\end{enumerate}
\end{Prop}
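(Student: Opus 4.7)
The plan is to dispatch parts (1)--(6) in order, since each successive part reuses the previous ones. For (1), I would prove both inclusions by rewriting any ``subquotient'' as either a quotient of a subobject or a subobject of a quotient: if $M = L/N$ with $N \subseteq L \subseteq X \in \mcX$, then $M$ embeds into $X/N \in \quot{\mcX}$, and conversely if $M \subseteq X/N$, then $M = L/N$ for a unique $N \subseteq L \subseteq X$, so $M$ is a quotient of $L \in \sub{\mcX}$. For (2), I would use the standard $3 \times 3$--lemma / Noether-isomorphism argument: given an extension $0 \to L \to M \to N \to 0$ with $L$ itself an extension $0 \to L' \to L \to L'' \to 0$, the quotient $M/L'$ fits into $0 \to L'' \to M/L' \to N \to 0$, and pushing the filtration gives associativity. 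Part (3) I would prove by induction: first show $\mcX = \mcX^1 \subseteq \ext{\mcX}$, then verify that $\ext{\mcX}$ is extension-closed (an extension of an object in $\mcX^m$ by one in $\mcX^n$ lies in $\mcX^{m+n}$ by (2) and induction), and finally that any extension-closed subcategory containing $\mcX$ contains every $\mcX^n$ by induction on $n$.

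For (4), I would argue with intersections and images. Given $M \hookrightarrow L$ and a presentation $0 \to X \to L \to Y \to 0$ with $X \in \mcX$, $Y \in \mcY$, the short exact sequence
\begin{equation*}
0 \to X \cap M \to M \to M/(X \cap M) \to 0
\end{equation*}
does the job, since $X \cap M \in \sub{\mcX}$ and $M/(X \cap M) \cong (M+X)/X \hookrightarrow Y$ lies in $\sub{\mcY}$. The quotient case is dual: if $\pi \colon L \twoheadrightarrow M$, push $X$ forward to obtain $0 \to \pi(X) \to M \to M/\pi(X) \to 0$, with $\pi(X) \in \quot{\mcX}$ and $M/\pi(X)$ a quotient of $L/X \cong Y$. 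Part (5) then follows by induction on $n$, combining (4) with associativity: $\sub{\mcX^{n+1}} = \sub{\mcX^n * \mcX} \subseteq \sub{\mcX^n} * \sub{\mcX} \subseteq \sub{\mcX}^{n+1}$, and similarly for $\quot{-}$.

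Finally, for (6) I would show that $\ext{\quot{\sub{\mcX}}}$ is a Serre subcategory containing $\mcX$, and that any Serre subcategory containing $\mcX$ must contain it. Extension-closure is (3). For subobject-closure, (5) gives $\sub{\ext{\quot{\sub{\mcX}}}} \subseteq \ext{\sub{\quot{\sub{\mcX}}}}$, and then (1) together with the idempotency $\sub{\sub{\mcX}} = \sub{\mcX}$ yields $\sub{\quot{\sub{\mcX}}} = \quot{\sub{\sub{\mcX}}} = \quot{\sub{\mcX}}$; quotient-closure is the symmetric computation using $\quot{\quot{\mcX}} = \quot{\mcX}$. Conversely, any Serre subcategory containing $\mcX$ is closed under each of $\sub{-}$, $\quot{-}$, and $\ext{-}$, so contains $\ext{\quot{\sub{\mcX}}}$.

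The only step with any real content is part (4); everything else is formal unpacking of definitions plus induction. The main bookkeeping obstacle in (6) is remembering to use (1) (rather than just (5)) to interchange $\sub{-}$ with $\quot{-}$, since without this one cannot collapse $\sub{\quot{\sub{\mcX}}}$ back into $\quot{\sub{\mcX}}$.
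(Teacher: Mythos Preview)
Your proposal is correct and follows essentially the same route as the paper's own proof: the same intersection/image construction for (4), the same induction via (2) and (4) for (3) and (5), and the identical computation $\sub{\quot{\sub{\mcX}}}=\quot{\sub{\sub{\mcX}}}=\quot{\sub{\mcX}}$ via (1) for (6). The only cosmetic differences are that the paper packages (2) and (4) as snake-lemma $3\times 3$ diagrams rather than spelling out the intersections, and it leaves the ``any Serre subcategory containing $\mcX$ contains $\ext{\quot{\sub{\mcX}}}$'' direction of (6) implicit.
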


\begin{proof}
	(1) A quotient object of a subobject of an object $M$ is of the form $L/N$, where $L$ and $N$ are subobject of $M$ such that $N\subset L$. So is a subobject of a quotient object of $M$.
	
	(2) Assume that $W$ belongs to $(\mcX*\mcY)*\mcZ$. Then there exist exact sequences $0\to X\to V\to Y\to 0$ and $0\to V\to W\to Z\to 0$ such that $X\in \mcX$, $Y\in \mcY$, and $Z\in \mcZ$. Denote the cokernel of the composite $X\to V\to W$ by $U$. Then we obtain the following commutative diagram by snake lemma:
	\begin{eqnarray*}
		\xymatrix{
			& 0\ar[d] & 0\ar[d] & 0\ar[d] & \\
			0\ar[r] & X\ar[r]\ar@{=}[d] & V\ar[r]\ar[d] & Y\ar[r]\ar[d] & 0 \\
			0\ar[r] & X\ar[r]\ar[d] & W\ar[r]\ar[d] & U\ar[r]\ar[d]& 0 \\
			 & 0\ar[r] & Z\ar@{=}[r]\ar[d] & Z\ar[r]\ar[d] & 0 \\
			&& 0 & 0 &.
		}
	\end{eqnarray*}
	This means that $W$ belongs to $\mcX*(\mcY*\mcZ)$, and we have $(\mcX*\mcY)*\mcZ\subset\mcX*(\mcY*\mcZ)$. In a similar way, we obtain $(\mcX*\mcY)*\mcZ\supset\mcX*(\mcY*\mcZ)$.
	
	(3) This follows from the definition of $\ext{\cdot}$ and (2).
	
	(4) Let $0\to L \to M \to N \to 0$ be an exact sequence and $M\in \mcX*\mcY$. Then there exist objects $M'$ in $\mcX$, $M''$ in $\mcY$, and an exact sequence $0\to M' \to M \to M''\to 0$. Denote the image of the composite $M'\to M \to N$ by $N'$ and the kernel of $M'\to N'$ by $L'$. Then we obtain the following commutative diagram by the snake lemma:
	\begin{eqnarray*}
		\xymatrix{
			& 0\ar[d] & 0\ar[d] & 0\ar[d] & \\
			0\ar[r] & L'\ar[d]\ar[r] & M'\ar[d]\ar[r] & N'\ar[d]\ar[r] & 0 \\
			0\ar[r] & L \ar[d]\ar[r]& M\ar[d]\ar[r] & N\ar[d]\ar[r] & 0 \\
			0\ar[r] & L''\ar[d]\ar[r] & M''\ar[d]\ar[r] & N''\ar[d]\ar[r] & 0 \\
			& 0 & 0 & 0 & .
		}
	\end{eqnarray*}
	This diagram shows that $L$ belongs to $\sub{\mcX}*\sub{\mcY}$ and that $N$ belongs to $\quot{\mcX}*\quot{\mcY}$.
	
	(5) This follows from the definition of $\ext{\cdot}$ and (4).
	
	(6) It is enough to show that $\ext{\quot{\sub{\mcX}}}$ is a Serre subcategory. By (3), $\ext{\quot{\sub{\mcX}}}$ is closed under extensions. By (1) and (5), we have
	\begin{eqnarray*}
		\sub{\ext{\quot{\sub{\mcX}}}}\subset\ext{\sub{\quot{\sub{\mcX}}}}=\ext{\quot{\sub{\mcX}}},\\
		\quot{\ext{\quot{\sub{\mcX}}}}\subset\ext{\quot{\quot{\sub{\mcX}}}}=\ext{\quot{\sub{\mcX}}},
	\end{eqnarray*}
	and hence $\ext{\quot{\sub{\mcX}}}$ is also closed under subobjects and quotient objects.
\end{proof}

According to Proposition \ref{Prop:subcats} (1), a subobject of a quotient object of an object $M$ is called a {\em subquotient} of $M$.

We obtain the following characterization of monoform modules.

\begin{Thm}\label{Thm:monoformness}
	Let $\mcA$ be an abelian category. Then an object $M$ in $\mcA$ is monoform if and only if $M$ does not belong to the smallest Serre subcategory which contains all the objects of the form $M/N$, where $N$ is a nonzero subobject of $M$.
\end{Thm}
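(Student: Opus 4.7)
The plan is to set $\mcC=\{M/N \mid N \text{ a nonzero subobject of } M\}$, write $\mcS:=\Serre{\mcC}$, and invoke Proposition~\ref{Prop:subcats}(6) to present $\mcS=\ext{\quot{\sub{\mcC}}}$. Every quotient of $M/N$ has the form $M/N'$ with $N'\supset N\neq 0$, so $\quot{\mcC}=\mcC$, and then Proposition~\ref{Prop:subcats}(1) yields $\quot{\sub{\mcC}}=\sub{\quot{\mcC}}=\sub{\mcC}$. Abbreviating $\mcY:=\sub{\mcC}$, the assertion reduces to showing that $M\in\ext{\mcY}$ if and only if $M$ fails to be monoform.

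The easier implication is that if $M$ is not monoform, then $M\in\ext{\mcY}$. I would extract from the failure a nonzero subobject $N\subset M$ together with a common nonzero subobject $K$ of $M$ and $M/N$. Such a $K$ lies in $\mcY$ as a subobject of $M/N$, while $M/K$ lies in $\mcC\subset\mcY$ because $K$ is nonzero. The exact sequence $0\to K\to M\to M/K\to 0$ then witnesses $M\in\mcY*\mcY\subset\ext{\mcY}$.

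For the converse---assuming $M$ monoform and proving $M\notin\ext{\mcY}$---my plan is to induct on $n\geq 0$ and establish the strengthened statement that \emph{no nonzero subobject of $M$ belongs to $\mcY^n$}. The base $n=0$ is immediate since $\mcY^0=\{0\}$. For the inductive step, any nonzero subobject $L$ of $M$ lying in $\mcY^{n+1}=\mcY^n*\mcY$ fits into an exact sequence $0\to L_1\to L\to L_2\to 0$ with $L_1\in\mcY^n$ and $L_2\in\mcY$. If $L_1\neq 0$, then $L_1$ is a nonzero subobject of $M$ belonging to $\mcY^n$, contradicting the inductive hypothesis. Otherwise $L\cong L_2\in\mcY$, so $L$ is isomorphic to a subobject of some $M/N$ with $N$ nonzero; combined with $L\subset M$, this produces a common nonzero subobject of $M$ and $M/N$, contradicting monoformness. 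Taking $L=M$ then yields $M\notin\ext{\mcY}=\mcS$.

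The main obstacle is precisely this inductive step: monoformness is a one-step statement about $M$ and its quotients, and it is not obvious a priori that it persists through iterated extensions in $\ext{\mcY}$. The trick that makes the induction close is to strengthen the target from ``$M\notin\mcY^n$'' to ``no nonzero subobject of $M$ lies in $\mcY^n$'', so that the sub-term $L_1$ of any extension built inside $M$ stays within the scope of the inductive hypothesis, while the quotient-term $L_2$ is dispatched by the definition of monoformness.
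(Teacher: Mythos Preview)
Your proof is correct. Both you and the paper reduce to the same description $\mcS=\ext{\mcY}$ with $\mcY=\sub{\mcC}$ (the paper writes $\mcY$ as $\{L'/L : 0\neq L\subset L'\subset M\}$, which is the same class), and the easy direction is handled identically.

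For the harder direction the arguments diverge slightly. The paper takes the minimal $n$ with $M\in\mcY^{n}$ and decomposes $\mcY^{n}=\mcY*\mcY^{n-1}$, peeling off the \emph{subobject} side in $\mcY$: the resulting exact sequence $0\to L'/L\to M\to N\to 0$ has $L'/L\in\mcY$ automatically nonzero by minimality of $n$, so $L'/L$ is simultaneously a nonzero subobject of $M$ and of $M/L$, and $M$ is not monoform. You instead decompose $\mcY^{n+1}=\mcY^{n}*\mcY$, peeling off the subobject side in $\mcY^{n}$; this forces you to strengthen the induction hypothesis to ``no nonzero subobject of $M$ lies in $\mcY^{n}$'' so that $L_{1}$ stays in scope. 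Both routes are standard and short; the paper's choice of which factor to split off lets it argue only about $M$ itself and avoids the quantification over subobjects, while your strengthened induction is a perfectly legitimate alternative that makes the role of Proposition~\ref{Prop:monoformsub} (closure of monoform objects under nonzero subobjects) more visibly built in.
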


\begin{proof}
	Set $\mcX=\Serre{M/N\ |\ N\text{ is a nonzero subobject of }M}$.
	
	Let $M$ be a non-monoform object. Then there exist nonzero subobjects $N$, $L$, and $L'$ of $M$ such that $L\subset L'$, and $N\cong L'/L$. Since $L'/L$ and $M/N$ belong to $\mcX$, we deduce that $M$ also belongs to $\mcX$ from the exact sequence
	\begin{eqnarray*}
		0\to \frac{L'}{L}\to M \to \frac{M}{N}\to 0.
	\end{eqnarray*}
	
	Conversely, assume that $M$ belongs to $\mcX$. By Proposition \ref{Prop:subcats} (6), we have
	\begin{eqnarray*}
		\mcX&=&\bigg\langle\bigg\langle\bigg\langle
		\frac{M}{N}\ \bigg|\ N\text{ is a nonzero subobject of }M
		\bigg\rangle_{\rm sub}\bigg\rangle_{\rm quot}\bigg\rangle_{\rm ext}\\
		&=&\ext{\mcY},
	\end{eqnarray*}
	where $\mcY=\{L'/L\ |\ L\text{ and }L'\text{ are nonzero subobjects of }M\text{ such that }L\subset L'$\}.
	Then there exists $n\in\mbZ_{\geq 0}$ such that $M$ belongs to $\mcY^{n}$ and does not belong to $\mcY^{n-1}$.
	In the case where $n=0$, $M$ is a zero object, and hence $M$ is not monoform. In the case where $n\geq 1$, we have an exact sequence
	\begin{eqnarray*}
		0 \to \frac{L'}{L} \to M \to N \to 0,
	\end{eqnarray*}
	where $L$ and $L'$ are nonzero subobjects of $M$ such that $L\subset L'$, and $N$ belongs to $\mcY^{n-1}$. Since $M$ does not belong to $\mcY^{n-1}$, we have $M\not\cong N$, and hence $L'/L$ is a nonzero subobject of $M$. This means that $M$ is not monoform.
\end{proof}

Now we define an equivalence relation between monoform objects. In order to do that, we pay attention to the following fact. Recall that a nonzero object $M$ is called a {\em uniform object} if for any nonzero subobjects $L$ and $L'$ of $M$, we have $L\cap L'\neq 0$.

\begin{Prop}\label{Prop:monoformuniform}
	Any monoform object is a uniform object.
\end{Prop}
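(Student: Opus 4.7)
The plan is to argue by contradiction. Suppose $H$ is a monoform object that fails to be uniform. Then by definition of uniformity there exist nonzero subobjects $L$ and $L'$ of $H$ with $L \cap L' = 0$.

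Next I would exploit this to produce a forbidden common subobject. Consider the composite
\begin{eqnarray*}
	L' \hookrightarrow H \twoheadrightarrow H/L.
\end{eqnarray*}
Its kernel is $L' \cap L$, which equals $0$ by assumption, so the composite is a monomorphism. Hence $L'$ is isomorphic to a subobject of $H/L$, while $L'$ itself is a nonzero subobject of $H$. Thus $L'$ is a common nonzero subobject of $H$ and $H/L$, where $L$ is a nonzero subobject of $H$. This directly contradicts the defining property of a monoform object applied with $N = L$.

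There is essentially no obstacle here: the argument is a one-line application of the definitions once the kernel of the composite is identified with the intersection $L \cap L'$. No appeal to the earlier structural propositions about $\sub{\cdot}$, $\quot{\cdot}$, or $\ext{\cdot}$ is needed, and the argument works in an arbitrary abelian category.
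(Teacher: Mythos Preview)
Your proof is correct and follows essentially the same contradiction strategy as the paper, but with a small simplification worth noting. The paper passes to the subobject $L+L'\cong L\oplus L'$, invokes Proposition~\ref{Prop:monoformsub} to conclude that $L\oplus L'$ is monoform, and then observes that $(L\oplus L')/L'\cong L$ to reach a contradiction. You instead work directly with $H$: the composite $L'\hookrightarrow H\twoheadrightarrow H/L$ has kernel $L\cap L'=0$, so $L'$ is a common nonzero subobject of $H$ and $H/L$, contradicting monoformness of $H$ itself. Your route avoids the detour through $L+L'$ and the appeal to Proposition~\ref{Prop:monoformsub}; the paper's route has the mild aesthetic payoff of exhibiting the contradiction inside a direct sum, but your argument is the more economical one.
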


\begin{proof}
	Let $H$ be a monoform object, and assume that there exist nonzero subobjects $L$ and $L'$ of $H$ such that $L\cap L'=0$. Then the sum $L+L'$ is isomorphic to the direct sum $L\oplus L'$, and by Proposition \ref{Prop:monoformsub}, it is also a monoform object. However, $(L\oplus L')/L'$ is isomorphic to $L$. This is a contradiction. Therefore $H$ is a uniform object.
\end{proof}

\begin{Def}\label{Def:atomequivalence}
	We say that monoform objects $H$ and $H'$ are {\em atom-equivalent} if there exists a common nonzero subobject of $H$ and $H'$.
\end{Def}

\begin{Prop}\label{Prop:atomequiv}
	The atom equivalence is an equivalence relation between monoform objects.
\end{Prop}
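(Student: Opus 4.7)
The plan is to verify the three axioms of an equivalence relation in turn: reflexivity and symmetry are immediate from Definition \ref{Def:atomequivalence}, and the only substantial point is transitivity, where the key tool is Proposition \ref{Prop:monoformuniform}.

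First, reflexivity: for a monoform object $H$, take $H$ itself as a common nonzero subobject of $H$ and $H$. Symmetry is built into the definition, since the condition ``there exists a common nonzero subobject of $H$ and $H'$'' is symmetric in $H$ and $H'$.

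The main task is transitivity. Suppose $H_{1}$ and $H_{2}$ are atom-equivalent via a nonzero object $L$ equipped with monomorphisms $L\hookrightarrow H_{1}$ and $L\hookrightarrow H_{2}$, and suppose $H_{2}$ and $H_{3}$ are atom-equivalent via a nonzero $L'$ with monomorphisms $L'\hookrightarrow H_{2}$ and $L'\hookrightarrow H_{3}$. I would identify $L$ and $L'$ with their images inside $H_{2}$. By Proposition \ref{Prop:monoformuniform}, the monoform object $H_{2}$ is uniform, so the intersection $K=L\cap L'$ taken inside $H_{2}$ is nonzero. Now $K$ is a nonzero subobject of $L$, so the composition $K\hookrightarrow L\hookrightarrow H_{1}$ exhibits $K$ as a nonzero subobject of $H_{1}$; symmetrically, the composition $K\hookrightarrow L'\hookrightarrow H_{3}$ exhibits $K$ as a nonzero subobject of $H_{3}$. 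Hence $K$ is a common nonzero subobject of $H_{1}$ and $H_{3}$, so $H_{1}$ and $H_{3}$ are atom-equivalent.

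The only potential subtlety—and what I would expect to be the main obstacle if one were not already equipped with Proposition \ref{Prop:monoformuniform}—is producing the common subobject $K$ in transitivity: one really needs the ambient object $H_{2}$ to be uniform, because otherwise two nonzero subobjects might meet in zero and no common subobject could be extracted. Having uniformity for free from monoformness makes the argument short.
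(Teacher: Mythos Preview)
Your proof is correct and follows essentially the same route as the paper: the paper also notes that only transitivity is nontrivial, then uses Proposition \ref{Prop:monoformuniform} to conclude that the middle monoform object is uniform, so the two common subobjects have a nonzero intersection there, which serves as a common subobject of the outer two.
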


\begin{proof}
	Only the transitivity is non-trivial. Let $H$, $H'$, and $H''$ be monoform objects, and assume that $H$ is atom-equivalent to $H'$ and that $H'$ is atom-equivalent to $H''$. Then there exist nonzero subobjects $L_{1}$ and $L_{2}$ of $H'$ such that $L_{1}$ is also a subobject of $H$, and $L_{2}$ is also that of $H''$. By Proposition \ref{Prop:monoformuniform}, $H'$ is a uniform object, and hence $L_{1}$ and $L_{2}$ have a nonzero intersection $L$ in $H'$. Then $L$ is a common nonzero subobject of $H$ and $H''$.
\end{proof}

Now we show that any nonzero noetherian object has a monoform subobject. Recall that an object $M$ is called {\em noetherian} if for any ascending chain
\begin{eqnarray*}
	L_{0}\subset L_{1}\subset L_{2}\subset \cdots
\end{eqnarray*}
of subobjects of $M$, there exists $n\in\mbZ_{\geq 0}$ such that
\begin{eqnarray*}
	L_{n}= L_{n+1}= L_{n+2}= \cdots.
\end{eqnarray*}

\begin{Thm}\label{Thm:monoformfiltration}
	Let $\mcA$ be an abelian category. Then for any noetherian object $M$ in $\mcA$, there exists a filtration
	\begin{eqnarray*}
		0=L_{0}\subset L_{1}\subset\cdots\subset L_{n}=M
	\end{eqnarray*}
	such that $L_{i}/L_{i-1}$ is a monoform object for any $i=1,\ldots,n$. In particular, $M$ has a monoform subobject $L_{1}$.
\end{Thm}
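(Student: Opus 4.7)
The plan is to reduce the claim to the following auxiliary statement: every nonzero noetherian object in $\mcA$ has a monoform subobject. Granting this, the filtration is produced by a straightforward induction. Set $L_0 = 0$; at stage $k$, if $L_k = M$ we stop, and otherwise $M/L_k$ is a nonzero noetherian object, so we may pick a monoform subobject of $M/L_k$ and let $L_{k+1} \supsetneq L_k$ be its preimage in $M$, so that $L_{k+1}/L_k$ is monoform. The process must terminate in finitely many steps, for otherwise the $L_k$ would form a strictly ascending chain of subobjects of $M$, contradicting the noetherian hypothesis.

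The real work is the auxiliary statement, which I would prove by contraposition: assuming that \emph{no} nonzero subobject of $M$ is monoform, I will build a strictly ascending chain of subobjects of $M$, thereby contradicting noetherianity. The idea is to maintain, at each stage $k$, a pair $N_k \subsetneq L_k^* \subseteq M$ together with an isomorphism $L_k^*/N_k \cong K_k$ onto some nonzero subobject $K_k$ of $M$, starting with $N_0 = 0$, $L_0^* = M$, $K_0 = M$. Since $K_k$ is a nonzero subobject of $M$, the hypothesis forces $K_k$ to be non-monoform, so there exist nonzero subobjects $N' \subsetneq L'$ of $K_k$ and a nonzero subobject $K_{k+1} \subseteq K_k$ with $L'/N' \cong K_{k+1}$.

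The key move is to transplant $N'$ and $L'$ from $K_k$ back into $L_k^*$: letting $N_{k+1}$ and $L_{k+1}^*$ be the preimages of $N'$ and $L'$ under the composition $L_k^* \twoheadrightarrow L_k^*/N_k \xrightarrow{\sim} K_k$, one obtains $N_k \subsetneq N_{k+1} \subseteq L_{k+1}^* \subseteq L_k^*$ (the first inclusion strict because $N' \neq 0$), together with $L_{k+1}^*/N_{k+1} \cong L'/N' \cong K_{k+1}$, so the invariant is preserved. The resulting strictly ascending chain $0 = N_0 \subsetneq N_1 \subsetneq N_2 \subsetneq \cdots$ of subobjects of $M$ then violates noetherianity, completing the contradiction.

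The main obstacle I anticipate is precisely this ascending-chain construction: naive iteration of the non-monoform condition, applied successively to $M$, then $K_1$, then $K_2$, and so on, produces only a descending chain $M \supseteq K_1 \supseteq K_2 \supseteq \cdots$ of subobjects of $M$, which is fully compatible with noetherianity and yields nothing. The trick is to carry along both the ambient subobject $L_k^* \subseteq M$ and the base $N_k$, and to lift each fresh non-monoform witness upward through the isomorphism $L_k^*/N_k \cong K_k$, thereby converting the internal repetition inside $K_k$ into a genuine enlargement of the bottom subobject $N_k$ inside $M$.
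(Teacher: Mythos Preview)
Your argument is correct, and it takes a genuinely different route from the paper.

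The paper works with the class $\mcX=\{\text{zero objects}\}\cup\{\text{monoform objects}\}$ and proves in one stroke that $M\in\ext{\mcX}$, which is equivalent to the existence of the desired filtration. The contradiction is obtained by a single \emph{maximality} step: choose a maximal subobject $N$ with $M/N\notin\ext{\mcX}$; then $M/N$ is not monoform, so one finds $N\subsetneq N'$ and $N\subsetneq L\subsetneq L'$ with $N'/N\cong L'/L$; maximality forces $M/N'$ and $M/L$ into $\ext{\mcX}$, and the closure property $\sub{\ext{\mcX}}\subset\ext{\sub{\mcX}}=\ext{\mcX}$ (Proposition~\ref{Prop:subcats}\,(5)) then pulls $L'/L$ into $\ext{\mcX}$, whence $M/N\in\ext{\mcX}$ by extension.

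You instead isolate the auxiliary statement ``every nonzero noetherian object has a monoform subobject'' and prove it by an explicit iterative construction of a strictly ascending chain $N_0\subsetneq N_1\subsetneq\cdots$, lifting each non-monoform witness back through the running isomorphism $L_k^{*}/N_k\cong K_k$. This is more elementary: it uses nothing beyond the definition of monoform and the correspondence between subobjects of a quotient and subobjects containing the kernel, and in particular avoids the $\sub{\cdot}$, $\ext{\cdot}$ calculus of Proposition~\ref{Prop:subcats}. The price is that you then need a second (easy) noetherian induction to assemble the filtration, whereas the paper gets the filtration immediately from $M\in\ext{\mcX}$. Conversely, the paper's approach dovetails with the Serre-subcategory formalism used throughout (e.g.\ in Theorem~\ref{Thm:monoformness}), so its extra machinery is not wasted.
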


\begin{proof}
	Set $\mcX=\{\text{zero objects}\}\cup\{\text{monoform objects}\}$. Remark that $\sub{\mcX}=\mcX$ by Proposition \ref{Prop:monoformsub}. By the definition of $\ext{\cdot}$, it is enough to show that $M$ belongs to $\ext{\mcX}$. Assume that $M$ does not belong to $\ext{\mcX}$. Since $M$ is noetherian, there exists a maximal element $N$ in
	\begin{eqnarray*}
		\{M'\in\mcA\ |\ M'\text{ is a subobject of }M\text{ such that }\frac{M}{M'}\\\text{does not belong to }\ext{\mcX}\}.
	\end{eqnarray*}
	Since $M/N$ does not belong to $\ext{\mcX}$, $M/N$ is not monoform. Hence there exist subobjects $N'$, $L$, and $L'$ of $M$ such that $N\subsetneq N'$, $N\subsetneq L\subsetneq L'$, and
	\begin{eqnarray*}
		\frac{N'}{N}\cong\frac{L'}{L}.
	\end{eqnarray*}
	By the maximality of $N$, $M/N'$ and $M/L$ belong to $\ext{\mcX}$. By Proposition \ref{Prop:subcats} (5), we have $L'/L\in\sub{\ext{\mcX}}\subset\ext{\sub{\mcX}}=\ext{\mcX}$. The exact sequence
	\begin{eqnarray*}
		0\to \frac{L'}{L}\to\frac{M}{N}\to\frac{M}{N'}\to 0
	\end{eqnarray*}
	implies that $M/N$ also belongs to $\ext{\mcX}$. This is a contradiction.
\end{proof}

In the rest of this section, by investigating some relationships between monoform objects and uniform objects, we show that any noetherian uniform object has a unique maximal monoform subobject. These arguments are not used in the proofs of the main theorems.

\begin{Prop}\label{Prop:quotuniform}
	Let $M$ be a noetherian object and $L$ be a uniform subobject of $M$. Then there exists a subobject $N$ of $M$ such that the composite $L\hookrightarrow M\twoheadrightarrow M/N$ is a monomorphism, and $M/N$ is a uniform object.
\end{Prop}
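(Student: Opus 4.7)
The plan is to use the noetherian hypothesis to choose $N$ as a maximal element of the set
\[
\mcS = \{N'\subset M \mid L\cap N'=0\},
\]
and then derive uniformity of $M/N$ from the uniformity of $L$ combined with the maximality of $N$. Note that $\mcS$ is nonempty since $0\in\mcS$, so a maximal element $N$ exists because $M$ is noetherian.

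The condition that the composite $L\hookrightarrow M\twoheadrightarrow M/N$ is a monomorphism is exactly the condition $L\cap N=0$, which holds by the choice of $N$. So the only thing left to verify is that $M/N$ is uniform. Suppose for contradiction that $M/N$ contains two nonzero subobjects with zero intersection; these lift to subobjects $N_{1},N_{2}$ of $M$ with $N\subsetneq N_{i}$ and $N_{1}\cap N_{2}=N$. Since $N\subsetneq N_{i}$, the maximality of $N$ in $\mcS$ forces $L\cap N_{i}\neq 0$ for $i=1,2$. But then $L\cap N_{1}$ and $L\cap N_{2}$ are two nonzero subobjects of the uniform object $L$, so their intersection
\[
(L\cap N_{1})\cap(L\cap N_{2}) = L\cap(N_{1}\cap N_{2}) = L\cap N
\]
is nonzero, contradicting $L\cap N=0$.

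I do not anticipate any real obstacle; the argument is a standard maximality argument. The only subtlety is recognizing that the desired monomorphism condition is just the statement $L\cap N=0$ in disguise, so the problem reduces cleanly to choosing $N$ maximal with this intersection property and then transferring uniformity from $L$ across the quotient using the maximality.
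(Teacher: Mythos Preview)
Your proof is correct and follows essentially the same approach as the paper: both take $N$ maximal among subobjects of $M$ with $L\cap N=0$, observe that this makes the composite a monomorphism, and then derive uniformity of $M/N$ by lifting a hypothetical pair of disjoint nonzero subobjects to $N_{1},N_{2}\supsetneq N$ with $N_{1}\cap N_{2}=N$ and using maximality together with the uniformity of $L$ to reach the contradiction $L\cap N\neq 0$.
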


\begin{proof}
	Take a maximal element $N$ in
	\begin{eqnarray*}
		\{M'\in\mcA\ |\ M'\text{ is a subobject of }M\text{ such that }L\cap M'=0\}.
	\end{eqnarray*}
	Then the kernel of the composite $L\hookrightarrow M\twoheadrightarrow M/N$ is $L\cap N=0$. Assume that $M/N$ is not uniform. Then there exist subobjects $N_{1}$ and $N_{2}$ of $M$ such that $N\subsetneq N_{1}$, $N\subsetneq N_{2}$, and $N_{1}\cap N_{2}=N$. By the maximality of $N$, $L\cap N_{1}$ and $L\cap N_{2}$ are nonzero. Since $L$ is uniform, we have $L\cap N=(L\cap N_{1})\cap (L\cap N_{2})\neq 0$. This is a contradiction. Therefore $M/N$ is uniform.
\end{proof}

\begin{Prop}\label{Prop:monoformsum}
	Let $M$ be a noetherian uniform object and $H_{1},H_{2}$ be monoform subobjects of $M$. Then $H_{1}+H_{2}$ is a monoform subobject of $M$.
\end{Prop}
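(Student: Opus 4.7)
I will argue by contradiction, using the uniformity of $M$ to force all subobjects in sight to meet both $H_{1}$ and $H_{2}$ nontrivially, and then invoking the monoformness of $H_{1}$ or $H_{2}$ to derive a contradiction.

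Suppose $H_{1}+H_{2}$ is not monoform. Then there exist nonzero subobjects $L,N$ of $H_{1}+H_{2}$ with $L$ isomorphic to a subobject of $(H_{1}+H_{2})/N$. Restricting to $L\cap H_{1}$ (nonzero by uniformity of $M$), I may assume $L\subset H_{1}$ while retaining the monomorphism. Letting $\pi\colon H_{1}+H_{2}\to(H_{1}+H_{2})/N$ denote the projection and taking $T$ to be the preimage of the image of $L$ under $\pi$ gives $N\subsetneq T\subset H_{1}+H_{2}$ with $T/N\cong L$. Uniformity of $M$ also guarantees that $N_{1}:=N\cap H_{1}$ and $H_{1}\cap H_{2}$ are nonzero.

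The plan then splits into two cases depending on whether $T\cap H_{1}\supsetneq N_{1}$ or $T\cap H_{1}=N_{1}$. In the first case, $(T\cap H_{1})/N_{1}$ is a nonzero subobject of $H_{1}/N_{1}$ that injects into $T/N\cong L\subset H_{1}$, giving a nonzero subobject of $H_{1}$ isomorphic to a subobject of $H_{1}/N_{1}$; since $N_{1}\neq 0$, this contradicts the monoformness of $H_{1}$. In the second case, the modular law (valid because $N\subset T$) yields
\[
T\cap(N+H_{1}) \;=\; N+(T\cap H_{1}) \;=\; N+N_{1} \;=\; N,
\]
so $T/N$ embeds into $(H_{1}+H_{2})/(N+H_{1})\cong H_{2}/J$, where $J:=H_{2}\cap(N+H_{1})$ contains $H_{1}\cap H_{2}$ and is therefore nonzero. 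Since $L\cap H_{2}$ is nonzero by uniformity, restricting the composite embedding $L\cong T/N\hookrightarrow H_{2}/J$ to $L\cap H_{2}$ exhibits a nonzero subobject of $H_{2}$ isomorphic to a subobject of $H_{2}/J$, contradicting the monoformness of $H_{2}$.

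The main obstacle is that the first instinct --- to embed $L$ directly into $\pi(H_{1})\cong H_{1}/N_{1}$ and conclude at once from $H_{1}$ monoform --- would require $(H_{1}+H_{2})/N$ to be uniform, which is generally false. The modular-law step is precisely what rescues the argument when the naive embedding of $L$ into $\pi(H_{1})$ fails, by symmetrically shifting attention from $H_{1}$ to $H_{2}$.
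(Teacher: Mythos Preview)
Your proof is correct, and it takes a genuinely different route from the paper's. The paper's argument first invokes Proposition~\ref{Prop:quotuniform} (which uses the noetherian hypothesis to produce a maximal complement) to replace $(H_{1}+H_{2})/N$ by a \emph{uniform} quotient $(H_{1}+H_{2})/N'$; once the quotient is uniform, the embedded copy of $L$ must meet the image of some $H_{i}$, and one concludes from monoformness of $H_{i}$ together with $H_{i}\cap N'=0$ contradicting uniformity of $M$. Your argument bypasses Proposition~\ref{Prop:quotuniform} entirely: instead of forcing the quotient to be uniform, you make a direct case split on whether $T\cap H_{1}$ strictly contains $N_{1}$, and in the negative case the modular law lets you push $T/N$ into a proper quotient of $H_{2}$.

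What this buys you is worth noting: your proof never uses the noetherian hypothesis on $M$. Uniformity of $M$ and the modular law for subobject lattices are all you invoke, so your argument establishes the stronger statement that in \emph{any} abelian category, the sum of two monoform subobjects of a uniform object is monoform. The paper only reaches this level of generality later (Proposition~\ref{Prop:generalizedmonoformsum}), and even then only under a Grothendieck-category assumption, via a Zorn's-lemma variant of Proposition~\ref{Prop:quotuniform}. Your direct modular-law argument is thus both more elementary and more general.
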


\begin{proof}
	Assume that $H_{1}+H_{2}$ is not monoform. Then there exists a nonzero subobject $N$ of $H_{1}+H_{2}$ such that $H_{1}+H_{2}$ and $(H_{1}+H_{2})/N$ have a common nonzero subobject $L$. Since $M$ is uniform, $L$ is also uniform. Hence by applying Proposition \ref{Prop:quotuniform} to $L\subset(H_{1}+H_{2})/N$, we obtain a subobject $N'$ of $H_{1}+H_{2}$ such that $N\subset N'$ holds, $L$ is isomorphic to a subobject of $(H_{1}+H_{2})/N'$, and $(H_{1}+H_{2})/N'$ is uniform. Since
	\begin{eqnarray*}
		\frac{H_{1}+H_{2}}{N'}=\frac{H_{1}+N'}{N'}+\frac{H_{2}+N'}{N'},
	\end{eqnarray*}
	there exists $i=1,2$ such that $(H_{i}+N')/N'\neq 0$. Since $(H_{1}+H_{2})/N'$ is uniform, $L$ and $(H_{i}+N')/N'$ have a nonzero intersection in $(H_{1}+H_{2})/N'$. By replacing $L$ by the intersection, we can assume that $L$ is a subobject of $(H_{i}+N')/N'$. Similarly, since $M$ is uniform, we can assume that $L$ is a subobject of $H_{i}$. Since $H_{i}$ is a monoform object, and we have
	\begin{eqnarray*}
		\frac{H_{i}+N'}{N'}\cong\frac{H_{i}}{H_{i}\cap N'},
	\end{eqnarray*}
	we obtain $H_{i}\cap N'=0$. This contradicts the fact that $M$ is uniform. Therefore $H_{1}+H_{2}$ is a monoform subobject of $M$.
\end{proof}

\begin{Thm}\label{Thm:maxmonoform}
	Let $\mcA$ be an abelian category. Then any noetherian uniform object in $\mcA$ has a unique maximal monoform subobject.
\end{Thm}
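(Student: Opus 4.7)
The plan is to combine the two preceding results in a completely formal way. Proposition \ref{Prop:monoformsum} tells us that inside a noetherian uniform object, the sum of any two monoform subobjects is again monoform, so the poset of monoform subobjects of $M$ is directed. Theorem \ref{Thm:monoformfiltration} tells us that this poset is nonempty whenever $M$ is nonzero (which holds since uniform objects are nonzero). The noetherian hypothesis then supplies a maximal element, and directedness promotes maximality to uniqueness.

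More concretely, I would first let $\mcS$ denote the collection of monoform subobjects of $M$. Since $M$ is uniform (hence nonzero) and noetherian, Theorem \ref{Thm:monoformfiltration} yields a filtration $0 = L_0 \subset L_1 \subset \cdots \subset L_n = M$ with monoform successive quotients; in particular $L_1$ is a monoform subobject of $M$, so $\mcS \neq \emptyset$. Because $M$ is noetherian, the poset of subobjects of $M$ satisfies the ascending chain condition, so the nonempty subfamily $\mcS$ admits a maximal element $H$.

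For uniqueness I would argue as follows. Let $H'$ be any monoform subobject of $M$. Since $M$ is a noetherian uniform object and both $H$ and $H'$ are monoform subobjects of $M$, Proposition \ref{Prop:monoformsum} shows that $H + H'$ is again a monoform subobject of $M$. As $H \subseteq H + H'$ and $H$ is maximal in $\mcS$, we conclude $H = H + H'$, hence $H' \subseteq H$. Thus $H$ contains every monoform subobject of $M$, which is the strongest possible form of the claim and in particular shows $H$ is the unique maximal monoform subobject.

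There is essentially no obstacle here: the hard work has already been done in Proposition \ref{Prop:monoformsum}, where uniformity of $M$ was used crucially to prevent $H_1 \cap N' = 0$. The only point worth double-checking is that the noetherian hypothesis really yields maximal elements in arbitrary nonempty families of subobjects (not merely termination of chains), but this is the standard equivalent formulation of the ascending chain condition and needs no further comment.
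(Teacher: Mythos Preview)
Your proof is correct and matches the paper's argument essentially line for line: existence of a monoform subobject via Theorem~\ref{Thm:monoformfiltration}, existence of a maximal one via the noetherian hypothesis, and uniqueness via Proposition~\ref{Prop:monoformsum}. The only difference is cosmetic---you phrase uniqueness as ``$H$ contains every monoform subobject'' rather than comparing two maximal elements---but the underlying step is identical.
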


\begin{proof}
	Let $M$ be a noetherian uniform object. By Theorem \ref{Thm:monoformfiltration}, there exists at least one monoform subobject of $M$, and hence there exists a maximal monoform subobject since $M$ is noetherian. Let $H_{1}$ and $H_{2}$ be maximal monoform subobjects of $M$. Then by Proposition \ref{Prop:monoformsum}, $H_{1}+H_{2}$ is also monoform. Then by the maximality of $H_{1}$ and $H_{2}$, we have $H_{1}=H_{1}+H_{2}=H_{2}$.
\end{proof}

\section{Atom spectrum}
\label{sec:spectrum}

In this section, we introduce the atom spectrum of an abelian category $\mcA$, which is a topological space associated with $\mcA$. Although it is not necessarily a set, we call it a ``space'' for simplicity.

\begin{Def}\label{Def:atomspectrum}
	Let $\mcA$ be an abelian category. Denote the class of all the monoform objects in $\mcA$ by $\AISpec \mcA$. The {\em atom spectrum} $\ASpec \mcA$ of $\mcA$ is the quotient class of $\AISpec \mcA$ by the atom equivalence. The equivalence class of a monoform object $H$ is denoted by $\overline{H}$ and is called the {\em atom} of $H$.
\end{Def}

As we see in section \ref{sec:commutative}, the atom spectrum is in fact a generalization of the prime spectrum of a commutative noetherian ring. In the rest of this section, we introduce the atom support and the associated atoms of an object. They are generalizations of the support and the associated primes of a module over a commutative noetherian ring, respectively.

\begin{Def}\label{Def:atomsupport}
	For an object $M$, we define a subclass $\ASupp M$ of $\ASpec\mcA$ by
	\begin{eqnarray*}
		\ASupp M=\{\overline{H}\in\ASpec\mcA\ |\text{ there exists }H'\in\overline{H}\\\text{which is a subquotient of }M\}.
	\end{eqnarray*}
	We call $\ASupp M$ the {\em atom support} of $M$.
\end{Def}

The following proposition is an expected property of ``supports''. 

\begin{Prop}\label{Prop:suppexact}
	Let $0\to L\to M\to N\to 0$ be an exact sequence. Then we have
	\begin{eqnarray*}
		\ASupp M=\ASupp L\cup\ASupp N.
	\end{eqnarray*}
\end{Prop}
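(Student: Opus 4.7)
The plan is to prove the two inclusions separately, with the forward one being almost immediate and the reverse one requiring a short case analysis that leverages the uniformity of monoform objects (Proposition \ref{Prop:monoformuniform}).

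First I would handle $\ASupp L \cup \ASupp N \subset \ASupp M$. Since $L$ is a subobject of $M$, any subquotient of $L$ is a subquotient of $M$; and since $N$ is a quotient of $M$, any subquotient of $N$ is also a subquotient of $M$ (using Proposition \ref{Prop:subcats}(1) if needed). Thus any monoform $H'$ that is a subquotient of $L$ or of $N$ is a subquotient of $M$, giving the inclusion directly from the definition of $\ASupp$.

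For the reverse inclusion $\ASupp M \subset \ASupp L \cup \ASupp N$, take $\overline{H} \in \ASupp M$ with $H'$ a monoform subquotient of $M$. Then $H'$ is a subobject of $M_{2}/M_{1}$ for some subobjects $M_{1} \subset M_{2} \subset M$. I would invoke the standard exact sequence
\begin{eqnarray*}
	0 \to \frac{M_{2}\cap L}{M_{1}\cap L} \to \frac{M_{2}}{M_{1}} \to \frac{M_{2}+L}{M_{1}+L} \to 0,
\end{eqnarray*}
whose first term $K := (M_{2}\cap L)/(M_{1}\cap L)$ is a subquotient of $L$ and whose third term $(M_{2}+L)/(M_{1}+L)$ is a subquotient of $N$ (the latter because it is a subquotient of $M/L \cong N$). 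By Proposition \ref{Prop:monoformuniform}, $H'$ is uniform, so inside $M_{2}/M_{1}$ the subobject $H'\cap K$ is either zero or nonzero, and this dichotomy completes the argument.

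In the case $H'\cap K \neq 0$, the intersection is a common nonzero subobject of $H'$ and $K$; by Proposition \ref{Prop:monoformsub} it is monoform, it is a subquotient of $L$, and it is atom-equivalent to $H'$, so $\overline{H} \in \ASupp L$. In the case $H'\cap K = 0$, the composite $H' \hookrightarrow M_{2}/M_{1} \twoheadrightarrow (M_{2}+L)/(M_{1}+L)$ has kernel $H'\cap K = 0$, so $H'$ embeds as a subobject of a subquotient of $N$, which is itself a subquotient of $N$ by Proposition \ref{Prop:subcats}(1); thus $\overline{H} \in \ASupp N$. The main obstacle is keeping the snake-style identifications in the displayed exact sequence straight, but once that is in place the uniformity of $H'$ cleanly separates the two alternatives and no further machinery is needed.
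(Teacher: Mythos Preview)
Your proof is correct and follows essentially the same strategy as the paper: decompose the ambient subquotient of $M$ along the given exact sequence and case on whether the monoform $H'$ meets the $L$-part nontrivially or instead maps injectively into the $N$-part (the paper uses the dual presentation with $H'$ as a \emph{quotient} of a subobject $M'\subset M$ and a snake-lemma diagram in the style of Proposition~\ref{Prop:subcats}(4), but the content is identical). One minor remark: your appeal to Proposition~\ref{Prop:monoformuniform} is superfluous, since the dichotomy $H'\cap K = 0$ versus $H'\cap K \neq 0$ is trivially exhaustive and uniformity plays no actual role in your argument.
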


\begin{proof}
	It is obvious that $\ASupp L\cup\ASupp N\subset\ASupp M$. Let $\overline{H}\in\ASupp M$. Then there exists $H'\in\overline{H}$ which is a quotient object of a subobject $M'$ of $M$. In a similar way to the proof of Proposition \ref{Prop:subcats} (4), we obtain the following commutative diagram:
	\begin{eqnarray*}
		\newdir^{ (}{!/-5pt/@^{(}}
		\xymatrix{
			0\ar[r] & B\ar[r] & H'\ar[r] & C\ar[r] & 0 \\
			0\ar[r] & L'\ar@{>>}[u]\ar@{^{ (}->}[d]\ar[r] & M'\ar@{>>}[u]\ar@{^{ (}->}[d]\ar[r] & N'\ar@{>>}[u]\ar@{^{ (}->}[d]\ar[r] & 0 \\
			0\ar[r] & L \ar[r]& M\ar[r] & N\ar[r] & 0.
		}
	\end{eqnarray*}
	
	In the case where $B\neq 0$, by Proposition \ref{Prop:monoformsub}, $B$ is a monoform subobject of $L'$ which is atom-equivalent to $H'$. Then $\overline{H}\in\ASupp L$.
	
	In the case where $B=0$, $H'$ is isomorphic to $C$. Then $\overline{H}\in\AAss N$.
\end{proof}

The associated atoms of a module is investigated in \cite{Storrer}. The following definition is in fact a generalization of that for modules.

\begin{Def}
	For an object $M$, we define a subclass $\AAss M$ of $\ASpec\mcA$ by
	\begin{eqnarray*}
		\AAss M=\{\overline{H}\in\ASpec\mcA\ |\text{ there exists }H'\in\overline{H}\\\text{which is a subobject of }M\}.
	\end{eqnarray*}
	We call an element in $\AAss M$ an {\em associated atom} of $M$.
\end{Def}

We obtain the following proposition, which is known as a property of associated primes.

\begin{Prop}\label{Prop:assexact}
	Let $0\to L\to M\to N\to 0$ be an exact sequence. Then we have
	\begin{eqnarray*}
		\AAss L\subset\AAss M\subset\AAss L\cup\AAss N.
	\end{eqnarray*}
\end{Prop}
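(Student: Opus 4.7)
The plan is to handle the two inclusions separately, using only the definition of $\AAss$ together with Proposition \ref{Prop:monoformsub}.

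For the inclusion $\AAss L \subset \AAss M$, I would simply note that any monoform subobject $H'$ of $L$ is, via the monomorphism $L \hookrightarrow M$, also a subobject of $M$, so every atom representative witnessing $\overline{H} \in \AAss L$ also witnesses $\overline{H} \in \AAss M$.

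For the inclusion $\AAss M \subset \AAss L \cup \AAss N$, I would take $\overline{H} \in \AAss M$ with representative $H' \in \overline{H}$ realized as a subobject of $M$, and form the intersection $K := H' \cap L$ inside $M$ (the pullback of the two monomorphisms $H' \hookrightarrow M$ and $L \hookrightarrow M$). Then I split into two cases. If $K = 0$, then the composite $H' \hookrightarrow M \twoheadrightarrow N$ has zero kernel, so $H'$ is a monoform subobject of $N$, giving $\overline{H} \in \AAss N$. If $K \neq 0$, then $K$ is a nonzero subobject of the monoform object $H'$; by Proposition \ref{Prop:monoformsub} it is itself monoform, and since $K \subset L$ and $K$ is a common nonzero subobject of $H'$ and itself, we have $\overline{K} = \overline{H'} = \overline{H}$ and $\overline{H} \in \AAss L$.

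The only conceptual point to handle carefully is the formation of the intersection $H' \cap L$ as a subobject of $M$ in a general abelian category, but this is standard (pullback of monomorphisms is again a monomorphism into both factors). No further obstacles arise: the proof is essentially a one-line case analysis on whether $H'$ meets $L$ nontrivially, and both cases are forced by the monoform hypothesis on $H'$.
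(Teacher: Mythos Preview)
Your proof is correct and follows essentially the same approach as the paper, which merely says ``This is shown in a similar way to the proof of Proposition~\ref{Prop:suppexact}.'' Your argument is precisely the specialization of that diagram-chase to the case where $H'$ is already a subobject (rather than a subquotient) of $M$: your intersection $K=H'\cap L$ is the object $B=L'$ in that diagram, and your two cases $K=0$ and $K\neq 0$ match the paper's cases $B=0$ and $B\neq 0$.
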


\begin{proof}
	This is shown in a similar way to the proof of Proposition \ref{Prop:suppexact}.
\end{proof}

\begin{rem}
	For a monoform object $H$, it is clear that $\AAss H=\{\overline{H}\}$. Then by Theorem \ref{Thm:monoformfiltration} and Proposition \ref{Prop:assexact}, the number of the associated atoms of a nonzero noetherian object is nonzero and finite.
\end{rem}

Now we introduce a topology on $\ASpec \mcA$. 

\begin{Def}\label{Def:topology}
	We say that a subclass $\Phi$ of $\ASpec\mcA$ is {\em open} if for any $\overline{H}\in\Phi$, there exists $H'\in\overline{H}$ such that $\ASupp H'\subset\Phi$.
\end{Def}

\begin{Prop}\label{Prop:topology}
	The family of all the open subclasses of $\ASpec\mcA$ satisfies the axioms of topology.
\end{Prop}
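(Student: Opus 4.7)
The plan is to verify the three standard axioms of a topology. The empty class is open vacuously, and $\ASpec\mcA$ itself is open since $\ASupp H'\subset\ASpec\mcA$ holds for every monoform $H'$; these are immediate.

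For arbitrary unions, given a family $\{\Phi_{\lambda}\}_{\lambda\in\Lambda}$ of open subclasses and any $\overline{H}\in\bigcup_{\lambda}\Phi_{\lambda}$, I would pick an index $\mu$ with $\overline{H}\in\Phi_{\mu}$ and apply openness of $\Phi_{\mu}$ to obtain $H'\in\overline{H}$ with $\ASupp H'\subset\Phi_{\mu}\subset\bigcup_{\lambda}\Phi_{\lambda}$. This step is routine.

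The only real content is closure under finite intersections, which by induction reduces to the case of two open subclasses $\Phi_{1}$ and $\Phi_{2}$. Given $\overline{H}\in\Phi_{1}\cap\Phi_{2}$, openness provides monoform objects $H_{1}\in\overline{H}$ and $H_{2}\in\overline{H}$ with $\ASupp H_{i}\subset\Phi_{i}$ for $i=1,2$. The key idea is to exploit the atom equivalence $\overline{H_{1}}=\overline{H_{2}}$: by Definition~\ref{Def:atomequivalence} there exists a common nonzero subobject $L$ of $H_{1}$ and $H_{2}$. By Proposition~\ref{Prop:monoformsub}, $L$ is monoform and clearly $\overline{L}=\overline{H}$. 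Since any subquotient of a subobject of $H_{i}$ is a subquotient of $H_{i}$, I get $\ASupp L\subset\ASupp H_{1}\cap\ASupp H_{2}\subset\Phi_{1}\cap\Phi_{2}$, which is exactly the witness required by Definition~\ref{Def:topology}.

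The main (and only) obstacle is realizing that one must pass from the two separate witnesses $H_{1},H_{2}$ to a single common subobject in order to bound the atom support by the intersection; once one invokes the definition of atom equivalence and the monotonicity $\ASupp L\subset\ASupp H_{i}$ for subobjects (which follows immediately from the fact that subquotients of $L$ are subquotients of $H_{i}$), the verification is complete.
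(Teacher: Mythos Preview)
Your proof is correct and follows essentially the same approach as the paper: both handle $\emptyset$, $\ASpec\mcA$, and arbitrary unions trivially, and for the intersection of two open subclasses both pick witnesses $H_{1},H_{2}\in\overline{H}$, pass to a common nonzero subobject via the atom equivalence, and use monotonicity of $\ASupp$ under subobjects. Your write-up adds slightly more justification (invoking Proposition~\ref{Prop:monoformsub} explicitly and spelling out why $\ASupp L\subset\ASupp H_{i}$), but the argument is the same.
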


\begin{proof}
	It is clear that $\emptyset,\ASpec\mcA$ are open and that the union of any family of open subclasses of $\mcA$ is also open.
	
	Let $\Phi_{1}$ and $\Phi_{2}$ be open subclasses of $\mcA$. For any $\overline{H}\in\Phi_{1}\cap\Phi_{2}$, there exist $H'_{1},H'_{2}\in\overline{H}$ such that $\ASupp H'_{1}\subset\Phi_{1}$, and $\ASupp H'_{2}\subset\Phi_{2}$. Since $H'_{1}$ is atom-equivalent to $H'_{2}$, there exists a common nonzero subobject $H'$ of $H'_{1}$ and $H'_{2}$. Then $H'\in\overline{H}$, and $\ASupp H'\subset \ASupp H'_{1}\cap \ASupp H'_{2}\subset\Phi_{1}\cap\Phi_{2}$. This means that $\Phi_{1}\cap\Phi_{2}$ is also open.
\end{proof}

We can state the topology on $\ASpec\mcA$ in the following way.

\begin{Prop}\label{Prop:topology2}
	A subclass $\Phi$ of $\ASpec\mcA$ is open if and only if for any $\overline{H}\in\Phi$, there exists an object $M$ such that $\overline{H}\in\ASupp M\subset\Phi$.
\end{Prop}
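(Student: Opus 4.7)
The plan is to prove the two implications separately; both are short once we use Proposition \ref{Prop:suppexact}.

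For the forward direction, suppose $\Phi$ is open and fix $\overline{H}\in\Phi$. By Definition \ref{Def:topology} there exists $H'\in\overline{H}$ with $\ASupp H'\subset\Phi$. Taking $M:=H'$ works: $H'$ is trivially a subquotient of itself, so $\overline{H}=\overline{H'}\in\ASupp H'=\ASupp M\subset\Phi$.

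For the reverse direction, assume that for every $\overline{H}\in\Phi$ there is an object $M$ with $\overline{H}\in\ASupp M\subset\Phi$. By definition of $\ASupp M$, there is $H'\in\overline{H}$ which is a subquotient of $M$, say $H'\cong M'/N'$ with $N'\subset M'\subset M$. The key observation is that $\ASupp H'\subset\ASupp M$: applying Proposition \ref{Prop:suppexact} to $0\to N'\to M'\to H'\to 0$ gives $\ASupp H'\subset\ASupp M'$, and applying it to $0\to M'\to M\to M/M'\to 0$ gives $\ASupp M'\subset\ASupp M$. Combining, $\ASupp H'\subset\ASupp M\subset\Phi$, which is exactly the condition for openness in Definition \ref{Def:topology}.

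Neither step presents a real obstacle; the only point worth flagging is the monotonicity of $\ASupp$ under taking subquotients, which is not stated explicitly in the text but is an immediate two-step consequence of Proposition \ref{Prop:suppexact}. I would mention this inclusion explicitly rather than just invoke it, since it is the one non-tautological ingredient in the argument.
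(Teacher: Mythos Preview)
Your proof is correct and matches the paper's approach exactly: the only substantive point is that $\overline{H}\in\ASupp M$ yields a representative $H'\in\overline{H}$ which is a subquotient of $M$, whence $\ASupp H'\subset\ASupp M$ by Proposition~\ref{Prop:suppexact}. The paper asserts this inclusion without spelling out the two-step argument, so your write-up is if anything more complete.
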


\begin{proof}
	If $\overline{H}\in\ASupp M$ for a monoform object $H$ and an object $M$, there exists $H'\in\overline{H}$ which is a subquotient of $M$. Then $\ASupp H'\subset\ASupp M$. This shows the claim.
\end{proof}

\section{Proof of Main Theorem 1}
\label{sec:main}

Throughout this section, let $\mcA$ be an abelian category. In this section, we prove Main Theorem 1. In order to do that, we define maps which give the one-to-one correspondence.

For a Serre subcategory $\mcX$ of $\mcA$, define a subclass $\ASupp\mcX$ of $\ASpec\mcA$ by
\begin{eqnarray*}
	\ASupp\mcX=\bigcup_{M\in\mcX}\ASupp M.
\end{eqnarray*}

For an open subclass $\Phi$ of $\ASpec\mcA$, define a subcategory $\ASupp^{-1}\Phi$ of $\mcA$ by
\begin{eqnarray*}
	\ASupp^{-1}\Phi=\{M\in\mcA\ |\ \ASupp M\subset\Phi\}.
\end{eqnarray*}

\begin{Lem}\label{Lem:Suppmaps}
	\begin{enumerate}
		\item For any Serre subcategory $\mcX$ of $\mcA$, $\ASupp\mcX$ is an open subclass of $\ASpec\mcA$.
		\item For any open subclass $\Phi$ of $\ASpec\mcA$, $\ASupp^{-1}\Phi$ is a Serre subcategory of $\mcA$.
	\end{enumerate}
\end{Lem}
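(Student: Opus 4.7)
My plan is to handle the two statements separately, with part (1) using the Serre closure properties directly and part (2) being essentially a corollary of Proposition \ref{Prop:suppexact}.

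For part (1), I would verify openness via the criterion of Definition \ref{Def:topology}. Take $\overline{H}\in\ASupp\mcX$. By definition of $\ASupp\mcX$, there exists $M\in\mcX$ with $\overline{H}\in\ASupp M$, and thus a representative $H'\in\overline{H}$ that is a subquotient of $M$. Since $\mcX$ is Serre (hence closed under subobjects and quotient objects), $H'\in\mcX$. But then every atom appearing in $\ASupp H'$ already belongs to $\ASupp\mcX$, because $H'$ itself is an object of $\mcX$. So $\ASupp H'\subset\ASupp\mcX$, which is exactly the openness condition for the point $\overline{H}$.

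For part (2), I would check the three closure conditions defining a Serre subcategory, each of which is immediate from Proposition \ref{Prop:suppexact}. Given an exact sequence $0\to L\to M\to N\to 0$, that proposition gives $\ASupp M=\ASupp L\cup\ASupp N$. Hence if $M\in\ASupp^{-1}\Phi$, both $\ASupp L$ and $\ASupp N$ are contained in $\ASupp M\subset\Phi$, so $L,N\in\ASupp^{-1}\Phi$; this yields closure under subobjects and quotient objects. Conversely, if $L,N\in\ASupp^{-1}\Phi$, then $\ASupp M=\ASupp L\cup\ASupp N\subset\Phi$, so $M\in\ASupp^{-1}\Phi$, giving closure under extensions. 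Also $\ASupp 0=\emptyset\subset\Phi$, so zero objects lie in $\ASupp^{-1}\Phi$, and the condition $\ASupp M\subset\Phi$ is manifestly preserved by isomorphism.

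There is no genuine obstacle here: part (1) uses nothing beyond unwinding the definitions of $\ASupp$, $\ASupp\mcX$, openness, and the Serre property, while part (2) is a direct translation of Proposition \ref{Prop:suppexact}. It is perhaps worth noting in the write-up that part (2) does not require $\Phi$ to be open, and that the delicate half of the later correspondence theorem will instead lie in verifying that the two maps $\mcX\mapsto\ASupp\mcX$ and $\Phi\mapsto\ASupp^{-1}\Phi$ are mutually inverse, which presumably relies on Theorem \ref{Thm:monoformfiltration} to generate enough monoform subquotients inside a noetherian object.
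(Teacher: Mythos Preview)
Your proof is correct and matches the paper's approach: the paper simply cites Proposition~\ref{Prop:topology2} for (1) and Proposition~\ref{Prop:suppexact} for (2), and your argument unpacks exactly these references. One small remark: in part~(1) you invoke the Serre property to place $H'$ in $\mcX$, but this is not actually needed---already $\ASupp M\subset\ASupp\mcX$ for the original $M\in\mcX$, so Proposition~\ref{Prop:topology2} applies directly and shows $\ASupp\mcX$ is open for \emph{any} subcategory $\mcX$.
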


\begin{proof}
	(1) This follows from Proposition \ref{Prop:topology2}.
	
	(2) This is immediate from Proposition \ref{Prop:suppexact}.
\end{proof}

The following lemma is a key to prove Main Theorem 1.

\begin{Lem}\label{Lem:suppSerre}
	Let $\mcX$ be a Serre subcategory of $\mcA$ and $M$ be a noetherian object such that $\ASupp M\subset\ASupp\mcX$. Then $M$ belongs to $\mcX$.
\end{Lem}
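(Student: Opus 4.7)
The plan is to argue by contradiction using a noetherian-induction reduction together with Theorem~\ref{Thm:monoformfiltration}. Suppose $M\notin\mcX$. Since $M$ is noetherian, its subobjects satisfy the ascending chain condition, so the class $\{K\subset M\mid M/K\notin\mcX\}$, which is nonempty because it contains $0$, admits a maximal element $K$. Setting $M'=M/K$, one obtains a nonzero noetherian object with $M'\notin\mcX$, $\ASupp M'\subset\ASupp M\subset\ASupp\mcX$ (by Proposition~\ref{Prop:suppexact}), and with the following key property: by the maximality of $K$, together with the correspondence between subobjects of $M'$ and subobjects of $M$ containing $K$, every proper quotient of $M'$ already lies in $\mcX$.

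The goal then reduces to producing a single nonzero subobject of $M'$ that lies in $\mcX$. Indeed, if such an $L\subset M'$ is found, then $M'/L$ is a proper quotient of $M'$ and so belongs to $\mcX$, and extension-closure applied to
\[ 0\to L\to M'\to M'/L\to 0 \]
forces $M'\in\mcX$, contradicting the construction.

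To exhibit such an $L$, I would invoke Theorem~\ref{Thm:monoformfiltration} to extract a monoform subobject $H\subset M'$. Its atom $\overline{H}$ lies in $\ASupp M'\subset\ASupp\mcX$, so by definition of $\ASupp\mcX$ there exist an object $N\in\mcX$ and some $H'\in\overline{H}$ which is a subquotient of $N$; since $\mcX$ is closed under subobjects and quotient objects, $H'\in\mcX$. The atom-equivalence $\overline{H}=\overline{H'}$ then furnishes a common nonzero subobject $L$ of $H$ and $H'$: this $L$ is a nonzero subobject of $M'$ that, being a subobject of $H'\in\mcX$, lies in $\mcX$, as required.

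The main obstacle is translating the global inclusion $\ASupp M\subset\ASupp\mcX$ into a concrete element of $\mcX$ sitting inside $M$. The noetherian-induction step does the bookkeeping by trimming $M$ down to an object $M'$ all of whose proper quotients already lie in $\mcX$, so that a single nonzero $\mcX$-subobject suffices; Theorem~\ref{Thm:monoformfiltration} is then what lets the atom-equivalence argument latch onto a concrete monoform piece of $M'$ whose atom can be seen inside $\mcX$.
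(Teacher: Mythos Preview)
Your proof is correct and follows the same overall strategy as the paper: reduce by noetherian induction to an object $M'$ all of whose proper quotients lie in $\mcX$, then locate a nonzero subobject of $M'$ inside $\mcX$ via atom equivalence and conclude by extension-closure. The one difference is that you invoke Theorem~\ref{Thm:monoformfiltration} to find a monoform subobject $H\subset M'$, whereas the paper applies Theorem~\ref{Thm:monoformness} to observe that $M'$ itself is already monoform (since all its proper quotients lie in the Serre subcategory $\mcX$, so $M'$ would belong to $\mcX$ if it were not monoform); both routes are equally valid, and the paper's observation is just slightly sharper than what is needed.
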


\begin{proof}
	Assume that $M$ does not belong to $\mcX$. Let $N$ be a maximal subobject of $M$ such that $M/N$ does not belong to $\mcX$. Then any proper quotient object of $M/N$ belongs to $\mcX$, and hence, by Theorem \ref{Thm:monoformness}, $M/N$ is a monoform object. Since $\overline{M/N}\in\ASupp M\subset\ASupp\mcX$, there exist an object $L$ in $\mcX$ and a subquotient $H$ of $L$ such that $M/N$ and $H$ have a common nonzero subobject $H'$. The quotient object of $M/N$ by $H'$ is of the form $M/N'$, where $N'$ is a subobject of $M$ such that $N\subsetneq N'$. By the maximality of $N$, $M/N'$ belongs to $\mcX$. Since we have the exact sequence
	\begin{eqnarray*}
		0\to H'\to \frac{M}{N}\to \frac{M}{N'}\to 0,
	\end{eqnarray*}
	and $M/N'$ and $H'$ belongs to $\mcX$, $M/N$ also belongs to $\mcX$. This is a contradiction. Therefore $M$ belongs to $\mcX$.
\end{proof}

Now we are ready to show Main Theorem 1. We say that an abelian category $\mcA$ is {\em noetherian} if any object in $\mcA$ is noetherian, and $\mcA$ is skeletally small. Note that $\ASpec\mcA$ forms a set if $\mcA$ is noetherian.\footnote{By replacing ``open subsets'' by ``open subclasses'', we still have Theorem \ref{Thm:main} for any abelian category consisting of noetherian objects which is not necessarily skeletally small.}

\begin{Thm}\label{Thm:main}
	Let $\mcA$ be a noetherian abelian category. Then the map $\mcX\mapsto\ASupp\mcX$ gives a one-to-one correspondence between Serre subcategories of $\mcA$ and open subsets of $\ASpec\mcA$. The inverse is given by $\Phi\mapsto\ASupp^{-1}\Phi$.
\end{Thm}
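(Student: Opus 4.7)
The plan is to show that the two maps $\mcX\mapsto\ASupp\mcX$ and $\Phi\mapsto\ASupp^{-1}\Phi$ are mutually inverse bijections. By Lemma~\ref{Lem:Suppmaps} they are already well-defined, so only the two composites need to be checked.

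First I would verify $\ASupp^{-1}(\ASupp\mcX)=\mcX$ for any Serre subcategory $\mcX$. The inclusion $\mcX\subset\ASupp^{-1}(\ASupp\mcX)$ is immediate from the definitions, since any $M\in\mcX$ trivially satisfies $\ASupp M\subset\ASupp\mcX$. For the reverse inclusion, take any $M\in\mcA$ with $\ASupp M\subset\ASupp\mcX$. Since $\mcA$ is noetherian, $M$ is a noetherian object, so Lemma~\ref{Lem:suppSerre} applies and gives $M\in\mcX$. This is the step that really uses the noetherian hypothesis, and it is the only place where the deeper content of Section~\ref{sec:Serre} (via Theorem~\ref{Thm:monoformness}, which produces a monoform quotient $M/N$ whose atom lies in $\ASupp M$) enters the argument.

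Next I would verify $\ASupp(\ASupp^{-1}\Phi)=\Phi$ for any open subset $\Phi\subset\ASpec\mcA$. The inclusion $\ASupp(\ASupp^{-1}\Phi)\subset\Phi$ is a formal consequence of the definition of $\ASupp^{-1}\Phi$: any atom that appears in $\ASupp M$ for some $M$ with $\ASupp M\subset\Phi$ is itself in $\Phi$. For the opposite inclusion, let $\overline{H}\in\Phi$. Openness of $\Phi$ together with Proposition~\ref{Prop:topology2} yields an object $M$ with $\overline{H}\in\ASupp M\subset\Phi$; hence $M\in\ASupp^{-1}\Phi$, whence $\overline{H}\in\ASupp(\ASupp^{-1}\Phi)$.

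The only substantive obstacle is the first composite, and it has already been absorbed into Lemma~\ref{Lem:suppSerre}; everything else is a direct unwinding of definitions. Thus the proof reduces to recording these two inclusions and citing Lemmas~\ref{Lem:Suppmaps} and~\ref{Lem:suppSerre} together with Proposition~\ref{Prop:topology2}.
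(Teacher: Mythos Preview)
Your proposal is correct and follows essentially the same route as the paper's own proof: both composites are checked exactly as you describe, with Lemma~\ref{Lem:suppSerre} handling the nontrivial inclusion $\ASupp^{-1}(\ASupp\mcX)\subset\mcX$ and the openness of $\Phi$ handling $\Phi\subset\ASupp(\ASupp^{-1}\Phi)$. The only cosmetic difference is that the paper invokes Definition~\ref{Def:topology} directly (producing a monoform representative $H'\in\overline{H}$ with $\ASupp H'\subset\Phi$) rather than the equivalent Proposition~\ref{Prop:topology2}.
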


\begin{proof}
	Let $\mcX$ be a Serre subcategory of $\mcA$. It is obvious that $\mcX\subset\ASupp^{-1}(\ASupp\mcX)$. Assume that $M$ belongs to $\ASupp^{-1}(\ASupp\mcX)$. Since $\ASupp M\subset \ASupp\mcX$, by Lemma \ref{Lem:suppSerre}, $M$ belongs to $\mcX$. Therefore $\ASupp^{-1}(\ASupp\mcX)=\mcX$.
	
	Let $\Phi$ be an open subset of $\ASpec\mcA$. It is obvious that $\ASupp(\ASupp^{-1}\Phi)\subset\Phi$. For any $\overline{H}\in\Phi$, by the definition of open subsets of $\ASpec\mcA$, there exists $H'\in\overline{H}$ such that $\ASupp H'\subset\Phi$. Since $H'$ belongs to $\ASupp^{-1}\Phi$, we have $\overline{H}=\overline{H'}\in\ASupp H'\subset\ASupp(\ASupp^{-1}\Phi)$. Therefore $\ASupp(\ASupp^{-1}\Phi)=\Phi$.
\end{proof}

\section{In the case of locally noetherian Grothendieck categories}
\label{sec:locnoetherian}

Throughout this section, let $\mcA$ be a locally noetherian Grothendieck category, whose definition is as follows.

\begin{Def}
	\begin{enumerate}
		\item An abelian category $\mcA$ is called a {\em Grothendieck category} if $\mcA$ has a generator and arbitrary direct sums, and $\mcA$ satisfies the following condition: for any object $M$ in $\mcA$, any family $\{L_{\lambda}\}_{\lambda\in\Lambda}$ of subobjects of $M$ such that any finite subfamily of $\{L_{\lambda}\}_{\lambda\in\Lambda}$ has an upper bound $L_{\mu}$, and any subobject $N$ of $M$, we have
		\begin{eqnarray*}
			\left(\sum_{\lambda\in\Lambda}L_{\lambda}\right)\cap N=\sum_{\lambda\in\Lambda}(L_{\lambda}\cap N).
		\end{eqnarray*}
		\item A Grothendieck category $\mcA$ is called {\em locally noetherian} if there exists a set of generators of $\mcA$ consisting of noetherian objects.
	\end{enumerate}
\end{Def}

Denote by $\noeth\mcA$ the subcategory of noetherian objects in $\mcA$, which is in fact a Serre subcategory of $\mcA$.

The following well-known set-theoretic observations can be shown easily by standard arguments.

\begin{Prop}\label{Prop:subsetssmall}
	\begin{enumerate}
		\item Let $M$ be an object in $\mcA$.
		\begin{enumerate}
			\item The collection of all the subobjects of $M$ forms a set.
			\item The collection of all the quotient objects of $M$ forms a set.
		\end{enumerate}
		\item The category $\noeth\mcA$ is skeletally small.
	\end{enumerate}
\end{Prop}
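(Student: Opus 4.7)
The plan is to prove all three statements using the existence of a (noetherian) generating set, which is where the hypotheses of Grothendieck category and locally noetherian Grothendieck category enter.

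For (1)(a), I would fix a generator $U$ of $\mcA$ (which exists by definition of Grothendieck category) and assign to each subobject $L$ of $M$ the subset
\begin{eqnarray*}
	S_{L}=\{f\in\Hom(U,M)\mid f\text{ factors through }L\hookrightarrow M\}
\end{eqnarray*}
of the set $\Hom(U,M)$. Since $U$ is a generator, the subobject $L$ is recovered from $S_{L}$ (two subobjects with the same $S_{L}$ must coincide, because the image of all morphisms $U\to M$ factoring through the larger one generates that subobject). Thus $L\mapsto S_{L}$ is injective from subobjects of $M$ into the power set of $\Hom(U,M)$, which is a set. For (1)(b), every quotient object of $M$ is determined up to isomorphism by its kernel, so (1)(b) follows immediately from (1)(a).

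For (2), the idea is to show that each noetherian object is a quotient of a \emph{finite} direct sum of objects chosen from a fixed set of noetherian generators $\{U_{\lambda}\}_{\lambda\in\Lambda}$ (whose existence is the local noetherianness hypothesis). Given a noetherian object $M$, the collection of all morphisms $f:U_{\lambda}\to M$ for varying $\lambda\in\Lambda$ forms a family whose images sum to $M$ (this is the generator property, combined with the AB5 axiom). Since $M$ is noetherian, the ascending chain obtained by taking finite partial sums of images must stabilize, so there exist $\lambda_{1},\ldots,\lambda_{n}\in\Lambda$ and morphisms $f_{i}:U_{\lambda_{i}}\to M$ whose images jointly cover $M$; equivalently, $M$ is a quotient of $U_{\lambda_{1}}\oplus\cdots\oplus U_{\lambda_{n}}$.

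The set of finite tuples of elements of $\Lambda$ is a set, and by (1)(b) each object of the form $U_{\lambda_{1}}\oplus\cdots\oplus U_{\lambda_{n}}$ has only a set of isomorphism classes of quotient objects. Therefore the class of isomorphism classes of noetherian objects is a set-indexed union of sets, proving that $\noeth\mcA$ is skeletally small. The only step requiring some care is the reduction to a \emph{finite} direct sum of generators, where one uses AB5 to conclude that $M$ is genuinely the directed union of finite partial sums of images, so that the noetherian hypothesis on $M$ forces stabilization at a finite stage; everything else is routine manipulation with the generator.
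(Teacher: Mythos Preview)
Your argument is correct and is exactly the standard one. The paper does not give a proof at all: it simply states that these are ``well-known set-theoretic observations'' that ``can be shown easily by standard arguments,'' and your proposal supplies precisely those standard arguments (generator plus power set for (1)(a), kernels for (1)(b), finite presentation over a noetherian generating set for (2)).
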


By Proposition \ref{Prop:subsetssmall} (2), we deduce that $\noeth\mcA$ is a noetherian abelian category and that $\ASpec(\noeth\mcA)$ forms a set. The following proposition ensures that $\ASpec\mcA$ is also a set.

\begin{Prop}\label{Prop:noethatom}
	$\ASpec\mcA$ coincides with $\ASpec(\noeth\mcA)$ as a topological space.
\end{Prop}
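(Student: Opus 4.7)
The plan is to show that the inclusion $\noeth\mcA\hookrightarrow\mcA$ induces a homeomorphism $\iota\colon\ASpec(\noeth\mcA)\to\ASpec\mcA$. First, for any noetherian $H$, every subobject and every quotient object of $H$ is itself noetherian, so both the monoformness condition of Definition \ref{Def:monoform} and the atom equivalence of Definition \ref{Def:atomequivalence} give the same answer whether evaluated in $\noeth\mcA$ or in $\mcA$. This yields a well-defined injective map $\iota$.

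For surjectivity I would invoke local noetherianness: given any monoform $H\in\mcA$, pick a nonzero noetherian subobject $H_{0}\subseteq H$ (which exists because noetherian objects generate $\mcA$). By Proposition \ref{Prop:monoformsub} the object $H_{0}$ is monoform, hence represents an atom of $\ASpec(\noeth\mcA)$, and $\iota(\overline{H_{0}})=\overline{H}$ because $H_{0}$ is itself a common nonzero subobject of $H_{0}$ and $H$. By Proposition \ref{Prop:subsetssmall} both atom spectra are in fact sets, so $\iota$ is a bijection of sets.

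To promote $\iota$ to a homeomorphism, I would first observe that for noetherian $H$ one has $\ASupp_{\mcA}H=\iota(\ASupp_{\noeth\mcA}H)$, since every subquotient of $H$ is noetherian and so monoform subquotients of $H$ agree in the two categories up to the bijection above. Now let $\Phi\subseteq\ASpec\mcA$ be open and $\overline{H}\in\Phi$; choose $H'\in\overline{H}$ with $\ASupp_{\mcA}H'\subseteq\Phi$ and then a nonzero noetherian subobject $H''\subseteq H'$. Then $H''$ is monoform, lies in the same atom class $\overline{H}$, and satisfies $\ASupp_{\mcA}H''\subseteq\ASupp_{\mcA}H'\subseteq\Phi$, so $\iota^{-1}(\Phi)$ satisfies the defining condition for openness in $\ASpec(\noeth\mcA)$. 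Conversely, if $\Psi\subseteq\ASpec(\noeth\mcA)$ is open and $\overline{H}\in\iota(\Psi)$, pick a noetherian witness $H'\in\overline{H}$ with $\ASupp_{\noeth\mcA}H'\subseteq\Psi$; then $\ASupp_{\mcA}H'=\iota(\ASupp_{\noeth\mcA}H')\subseteq\iota(\Psi)$, showing $\iota(\Psi)$ is open.

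The only substantive input is the local noetherian hypothesis, invoked solely to produce a noetherian representative inside each atom class; the remaining verifications are routine compatibility checks with the inclusion functor, and I do not expect any serious obstacle beyond being careful about which ambient category one is working in at each step.
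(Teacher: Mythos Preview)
Your proposal is correct and follows essentially the same approach as the paper: the key input is that every monoform object has a nonzero noetherian (hence monoform) subobject, so every atom admits a noetherian representative, and the rest unwinds from the definitions. The paper compresses everything after this observation into a single sentence (``the claim follows from the definition of the atom spectrum''), whereas you spell out explicitly the injectivity of $\iota$, the equality $\ASupp_{\mcA}H=\iota(\ASupp_{\noeth\mcA}H)$ for noetherian $H$, and the two directions of the openness comparison---but these are exactly the routine checks the paper is implicitly invoking.
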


\begin{proof}
	Any nonzero object in $\mcA$ has a nonzero noetherian subobject, and hence by Proposition \ref{Prop:monoformsub}, any element in $\ASpec\mcA$ is represented by an object in $\noeth\mcA$. Therefore the claim follows from the definition of the atom spectrum.
\end{proof}

In order to relate the subcategories of $\mcA$ and those of $\noeth\mcA$, we recall the following result.

\begin{Prop}\label{Prop:Serreloc}
	The map $\mcX\mapsto\mcX\cap\noeth\mcA$ gives a one-to-one correspondence between localizing subcategories of $\mcA$ and Serre subcategories of $\noeth\mcA$. The inverse map is given by $\mcY\mapsto\loc{\mcY}$.
\end{Prop}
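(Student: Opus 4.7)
The plan is to verify that the two maps are well-defined, then check both compositions are the identity. The forward map is routine: if $\mcX$ is a localizing subcategory of $\mcA$, then $\mcX\cap\noeth\mcA$ is clearly closed under those subobjects, quotient objects, and extensions that lie inside $\noeth\mcA$, so it is a Serre subcategory of $\noeth\mcA$.

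For the composition $\mcY\mapsto\loc{\mcY}\mapsto\loc{\mcY}\cap\noeth\mcA$, the inclusion $\mcY\subseteq\loc{\mcY}\cap\noeth\mcA$ is trivial. For the reverse inclusion, I would introduce the auxiliary subcategory
\begin{eqnarray*}
    \mcZ=\{M\in\mcA\ |\ \text{every noetherian subobject of }M\text{ lies in }\mcY\}
\end{eqnarray*}
and show that $\mcZ$ is a localizing subcategory of $\mcA$ that contains $\mcY$. Granting this, one has $\loc{\mcY}\subseteq\mcZ$, so any noetherian $M\in\loc{\mcY}$ is a noetherian subobject of itself in $\mcZ$, forcing $M\in\mcY$. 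Closure of $\mcZ$ under subobjects is immediate. For quotients, given $M\in\mcZ$ with a quotient $M\twoheadrightarrow N$ and a noetherian subobject $L\subseteq N$, write the preimage $L'=\sum_i L'_i$ as a sum of its noetherian subobjects; since the images $\pi(L'_i)$ form a directed family with union $L$ and $L$ is noetherian, finitely many suffice, so $L$ is a quotient of a noetherian subobject of $M$ (which lies in $\mcY$), hence $L\in\mcY$. Closure under extensions follows from a snake-lemma argument applied to a noetherian subobject $L'\subseteq M$ in a short exact sequence $0\to L\to M\to N\to 0$, using that $L'\cap L$ and $L'/(L'\cap L)$ are noetherian subquotients of noetherian subobjects of $L$ and $N$ respectively.

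The main obstacle is closure under direct sums. Given $\{M_\lambda\}_{\lambda\in\Lambda}$ in $\mcZ$ and a noetherian subobject $N\subseteq\bigoplus_\lambda M_\lambda$, the Grothendieck (AB5) axiom gives $N=\sum_F (N\cap \bigoplus_{\lambda\in F} M_\lambda)$ over finite $F\subseteq\Lambda$; noetherianity of $N$ makes this stabilize, placing $N$ inside some finite $\bigoplus_{i=1}^n M_{\lambda_i}$. This finite sum is an iterated extension of objects in $\mcZ$, hence lies in $\mcZ$ by the previous step, so $N\in\mcY$ as required.

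Finally, for the composition $\mcX\mapsto\mcX\cap\noeth\mcA\mapsto\loc{\mcX\cap\noeth\mcA}$, the inclusion $\loc{\mcX\cap\noeth\mcA}\subseteq\mcX$ is automatic. Conversely, since $\mcA$ is locally noetherian, every $M\in\mcA$ is the sum of its noetherian subobjects $\{M_\lambda\}$, so there is an epimorphism $\bigoplus_\lambda M_\lambda\twoheadrightarrow M$. When $M\in\mcX$, each $M_\lambda\in\mcX\cap\noeth\mcA$ by closure under subobjects, and the direct sum lies in $\loc{\mcX\cap\noeth\mcA}$; passing to the quotient gives $M\in\loc{\mcX\cap\noeth\mcA}$.
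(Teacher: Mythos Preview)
Your argument is correct and essentially self-contained. The paper, by contrast, does not prove this proposition at all: its proof consists solely of a citation to \cite[Theorem 2.8]{Herzog} and \cite[Corollary 2.10]{Krause}, which establish the result in the broader setting of locally coherent Grothendieck categories. So your route is genuinely different in that you supply an explicit argument tailored to the locally noetherian case, whereas the paper defers to the literature. The key device in your proof---the auxiliary subcategory $\mcZ=\{M\in\mcA\mid\text{every noetherian subobject of }M\text{ lies in }\mcY\}$---is a clean way to bound $\loc{\mcY}$ from above, and your verification that $\mcZ$ is localizing is sound (the AB5 step for direct sums and the directed-union argument for quotients both use the locally noetherian hypothesis correctly). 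What the paper's approach buys is brevity and context within a more general framework; what yours buys is that a reader need not chase the references.
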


\begin{proof}
	This follows from more general results \cite[Theorem 2.8]{Herzog} and \cite[Corollary 2.10]{Krause}.
\end{proof}

Now we have the classification of localizing subcategories of $\mcA$.

\begin{Thm}\label{Thm:locSerreatom}
	Let $\mcA$ be a locally noetherian Grothendieck category. Then there exist one-to-one correspondences between localizing subcategories of $\mcA$, Serre subcategories of $\noeth\mcA$, and open subsets of $\ASpec\mcA$.
\end{Thm}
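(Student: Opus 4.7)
The plan is to obtain the three-way correspondence by composing the two bijections already at our disposal, so essentially no new work is needed beyond verifying that the necessary hypotheses are in force.

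First I would check that $\noeth\mcA$ is a noetherian abelian category in the sense required by Main Theorem~1: by definition every object of $\noeth\mcA$ is noetherian, and Proposition~\ref{Prop:subsetssmall}(2) guarantees skeletal smallness. Thus Theorem~\ref{Thm:main} applies to $\noeth\mcA$ and yields a bijection
\begin{eqnarray*}
	\{\,\text{Serre subcategories of }\noeth\mcA\,\}\;\longleftrightarrow\;\{\,\text{open subsets of }\ASpec(\noeth\mcA)\,\},
\end{eqnarray*}
given by $\mcY\mapsto\bigcup_{M\in\mcY}\ASupp M$ with inverse $\Phi\mapsto\{M\in\noeth\mcA\mid\ASupp M\subset\Phi\}$.

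Next I would invoke Proposition~\ref{Prop:noethatom}, which identifies $\ASpec(\noeth\mcA)$ with $\ASpec\mcA$ as topological spaces. This converts the bijection just obtained into one between Serre subcategories of $\noeth\mcA$ and open subsets of $\ASpec\mcA$. Combining this with Proposition~\ref{Prop:Serreloc}, which provides a bijection between localizing subcategories of $\mcA$ and Serre subcategories of $\noeth\mcA$ via $\mcX\mapsto\mcX\cap\noeth\mcA$ and $\mcY\mapsto\loc{\mcY}$, yields the desired triple correspondence.

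There is no real obstacle; the only point requiring a line of care is the explicit form of the composed map from localizing subcategories of $\mcA$ to open subsets of $\ASpec\mcA$. Tracing through, a localizing subcategory $\mcX$ of $\mcA$ is sent to $\bigcup_{M\in\mcX\cap\noeth\mcA}\ASupp M$; since every nonzero object of $\mcA$ has a nonzero noetherian subobject and atom supports are determined by subquotients, this agrees with $\bigcup_{M\in\mcX}\ASupp M$. Conversely, an open subset $\Phi\subset\ASpec\mcA$ is sent to $\loc{\,\{M\in\noeth\mcA\mid\ASupp M\subset\Phi\}\,}$, which one may equally describe as $\{M\in\mcA\mid\ASupp M\subset\Phi\}$ once one notes, using Proposition~\ref{Prop:suppexact} and closure under direct sums, that the latter is a localizing subcategory containing the former and is recovered by intersecting with $\noeth\mcA$. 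With this bookkeeping recorded, the theorem follows immediately.
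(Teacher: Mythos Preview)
Your proposal is correct and follows exactly the same approach as the paper: apply Theorem~\ref{Thm:main} to $\noeth\mcA$, then invoke Proposition~\ref{Prop:noethatom} and Proposition~\ref{Prop:Serreloc}. The extra bookkeeping you provide about the explicit form of the composed maps goes beyond what the paper records, but it is consistent and does not constitute a different route.
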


\begin{proof}
	Apply Theorem \ref{Thm:main} to $\noeth\mcA$, and use Proposition \ref{Prop:noethatom} and Proposition \ref{Prop:Serreloc}.
\end{proof}

The following proposition about atom supports and associated atoms is well-known as properties of supports and associated primes in the commutative ring theory.

\begin{Prop}\label{Prop:suppassdirectsum}
	For any family $\{M_{\lambda}\}_{\lambda\in\Lambda}$ of objects in $\mcA$, we have
	\begin{eqnarray*}
		\ASupp \bigoplus_{\lambda\in\Lambda}M_{\lambda}=\bigcup_{\lambda\in\Lambda}\ASupp M_{\lambda}
	\end{eqnarray*}
	and
	\begin{eqnarray*}
		\AAss \bigoplus_{\lambda\in\Lambda}M_{\lambda}=\bigcup_{\lambda\in\Lambda}\AAss M_{\lambda}.
	\end{eqnarray*}
\end{Prop}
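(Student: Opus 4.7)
The plan is to show the inclusion $\subset$ in each identity, since the reverse $\supset$ is immediate: each $M_\lambda$ is a subobject of $\bigoplus_\lambda M_\lambda$ via the canonical injection, so every (sub)quotient or subobject of $M_\lambda$ is automatically a (sub)quotient or subobject of the whole direct sum. The strategy for $\subset$ has two stages: first use the Grothendieck AB5 axiom to reduce to the case of a \emph{finite} direct sum, and then handle the finite case by induction on the number of summands.

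For the reduction to finitely many summands, suppose $\overline{H}$ belongs to the left-hand side and pick a monoform representative $H'$ realized inside $\bigoplus_\lambda M_\lambda$. In the $\AAss$ case $H'$ is a subobject, and I would first replace it by any nonzero noetherian subobject, which exists by local noetherianity; by Proposition \ref{Prop:monoformsub} the replacement is still monoform and atom-equivalent to $H'$. Applying AB5 to the directed family $\{H'\cap\bigoplus_{\lambda\in F}M_\lambda\}_F$ indexed by finite $F\subset\Lambda$, the supremum equals $H'$, and noetherianity forces it to be attained at some $F$, so $H'\subset\bigoplus_{\lambda\in F}M_\lambda$. In the $\ASupp$ case, write $H'=L/K$ with $K\subset L\subset\bigoplus_\lambda M_\lambda$; AB5 gives $L=\sum_F(L\cap\bigoplus_{\lambda\in F}M_\lambda)$, so the images of these intersections in $H'$ sum to $H'$, and since $H'\neq 0$ at least one such image is a nonzero subobject $H''$ of $H'$. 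By Proposition \ref{Prop:monoformsub}, $H''$ is monoform and atom-equivalent to $H'$, while being a subquotient of $\bigoplus_{\lambda\in F}M_\lambda$.

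The finite case goes by induction on $n$, with $n=1$ trivial. For $n\geq 2$, let $p\colon M_1\oplus\cdots\oplus M_n\to M_n$ be the projection onto the last summand. For $\AAss$, given a monoform subobject $H'\subset M_1\oplus\cdots\oplus M_n$, I would set $K:=H'\cap\ker p$: if $K=0$ then $H'\hookrightarrow M_n$ directly, and if $K\neq 0$ then $K$ is a nonzero monoform subobject of $H'$ (hence atom-equivalent to $H'$) lying in $M_1\oplus\cdots\oplus M_{n-1}$, and the induction hypothesis applies. For $\ASupp$, with $H'=L/K$, quotient map $\pi\colon L\twoheadrightarrow H'$, and $L':=L\cap(M_1\oplus\cdots\oplus M_{n-1})$: either $\pi(L')=0$, in which case $\pi$ factors through $L/L'\hookrightarrow M_n$ and $H'$ is a subquotient of $M_n$, or $\pi(L')$ is a nonzero monoform subobject of $H'$ (atom-equivalent) that is simultaneously a quotient of $L'\subset M_1\oplus\cdots\oplus M_{n-1}$, so induction finishes.

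The only delicate point throughout is tracking the atom-equivalence class under these replacements: at every stage the newly constructed object turns out to be a nonzero subobject of $H'$, so Proposition \ref{Prop:monoformsub} together with the definition of atom equivalence guarantees it is monoform and lies in $\overline{H}$. Beyond that, the argument is really just AB5 plus one short induction, and I do not anticipate any genuine obstacle.
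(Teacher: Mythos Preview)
Your proof is correct and follows the same overall strategy as the paper: reduce to a finite direct sum via the locally noetherian/AB5 structure, then handle the finite case using the behavior of $\ASupp$ and $\AAss$ on short exact sequences. The differences are minor and purely in packaging. For the finite case, your induction on $n$ is exactly the content of Proposition~\ref{Prop:suppexact} and Proposition~\ref{Prop:assexact} applied to the split sequence $0\to M_1\oplus\cdots\oplus M_{n-1}\to M_1\oplus\cdots\oplus M_n\to M_n\to 0$; the paper simply cites those propositions instead of unfolding them. For the reduction step in the $\ASupp$ case, the paper replaces $H'$ by a noetherian subobject $H''$, then lifts to a noetherian subobject $L'$ of $L$ still surjecting onto $H''$, and uses noetherianity of $L'$ to land in a finite subsum; your argument instead observes directly via AB5 that some $\pi\bigl(L\cap\bigoplus_{\lambda\in F}M_\lambda\bigr)$ is nonzero, which is a slightly slicker move that avoids the noetherian lift. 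Either way the outcome is the same.
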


\begin{proof}
	It is clear that $\bigcup_{\lambda\in\Lambda}\ASupp M_{\lambda}\subset\ASupp \bigoplus_{\lambda\in\Lambda}M_{\lambda}$. Let $\overline{H}\in\ASupp\bigoplus_{\lambda\in\Lambda}M_{\lambda}$. Then there exists $H'\in\overline{H}$ which is a subquotient of $\bigoplus_{\lambda\in\Lambda}M_{\lambda}$. Take a nonzero noetherian subobject $H''$ of $H'$. Then by Proposition \ref{Prop:monoformsub}, $H''$ is also a monoform object. By Proposition \ref{Prop:subcats} (1), $H''$ is a quotient object of a subobject $L$ of $\bigoplus_{\lambda\in\Lambda}M_{\lambda}$. There exists a noetherian subobject $L'$ of $L$ such that the composite $L'\hookrightarrow L\twoheadrightarrow H''$ is still an epimorphism. Since $L'$ is a noetherian subobject of $\bigoplus_{\lambda\in\Lambda}M_{\lambda}$, there exist $\lambda_{1},\ldots,\lambda_{n}\in\Lambda$ such that $L'\subset\bigoplus_{i=1}^{n}M_{\lambda_{i}}$, and hence, by Proposition \ref{Prop:suppexact},
	\begin{eqnarray*}
		\overline{H}=\overline{H'}=\overline{H''}\in\ASupp\bigoplus_{i=1}^{n}M_{\lambda_{i}}=\bigcup_{i=1}^{n}\ASupp M_{\lambda_{i}}\subset\bigcup_{\lambda\in\Lambda}\ASupp M_{\lambda}.
	\end{eqnarray*}
	Therefore $\ASupp \bigoplus_{\lambda\in\Lambda}M_{\lambda}\subset\bigcup_{\lambda\in\Lambda}\ASupp M_{\lambda}$.
	
	Similarly, we can show that $\AAss \bigoplus_{\lambda\in\Lambda}M_{\lambda}=\bigcup_{\lambda\in\Lambda}\AAss M_{\lambda}$ by Proposition \ref{Prop:assexact}.
\end{proof}

The atom spectrum of $\mcA$ is in fact homeomorphic to the Ziegler spectrum of $\mcA$. The definition of the Ziegler spectrum was originally given by Ziegler \cite{Ziegler}. We use the following definition as in \cite{Herzog}.

\begin{Def}\label{Def:Zieglerspectrum}
	Let $\mcA$ be a locally noetherian Grothendieck category. Denote the collection of all the isomorphism classes of indecomposable injective objects in $\mcA$ by $\Zg\mcA$. For a noetherian object $M$, set
	\begin{eqnarray*}
		\mcO(M)=\{I\in\Zg\mcA\ |\ \Hom_{\mcA}(M,I)\neq 0\}.
	\end{eqnarray*}
	We define a topology on $\Zg\mcA$ by taking
	\begin{eqnarray*}
		\{\mcO(M)\ |\ M\text{ is a noetherian object in }\mcA\}
	\end{eqnarray*}
	as a basis of open subclasses. The topological space $\Zg\mcA$ is called the {\em Ziegler spectrum} of $\mcA$.
\end{Def}

Recall that any object $M$ in a Grothendieck category $\mcA$ has its injective hull $E(M)$ in $\mcA$.

\begin{Lem}\label{Lem:atomenv}
	Monoform objects $H$ and $H'$ in $\mcA$ are atom-equivalent if and only if $E(H)\cong E(H')$.
\end{Lem}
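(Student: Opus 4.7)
The plan is to prove the equivalence by exploiting the fact that monoform objects are uniform (Proposition \ref{Prop:monoformuniform}), so every nonzero subobject is essential, which lets us identify injective hulls of monoform objects with injective hulls of any nonzero subobject. Since $\mcA$ is a Grothendieck category, injective hulls exist and are unique up to isomorphism over a given essential subobject.

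For the forward direction, suppose $H$ and $H'$ are atom-equivalent, so there is a common nonzero subobject $L$, meaning monomorphisms $L \hookrightarrow H$ and $L \hookrightarrow H'$. Because $H$ is uniform, any nonzero subobject intersects $L$ nontrivially, so $L$ is essential in $H$; similarly for $H'$. Hence $E(H)$ is an injective hull of $L$, and so is $E(H')$. The uniqueness of injective hulls then gives $E(H) \cong E(H')$.

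For the converse, fix an isomorphism $\varphi : E(H) \xrightarrow{\sim} E(H')$. Regard $H$ (via $\varphi$) and $H'$ as nonzero subobjects of $E(H')$. Since $H'$ is monoform, hence uniform, and essential in its injective hull, the subobject $H'$ is essential in $E(H')$, so the intersection $\varphi(H) \cap H'$ taken inside $E(H')$ is nonzero. Pulling this intersection back through $\varphi^{-1}$ realizes it as a subobject of $H$, while it is tautologically a subobject of $H'$, so it is a common nonzero subobject of $H$ and $H'$, establishing atom-equivalence.

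The steps are conceptually routine, and the main potential obstacle is a setup issue rather than a deep one: one must be careful that ``common nonzero subobject'' is correctly interpreted as the existence of monomorphisms into both $H$ and $H'$ from some nonzero $L$, and that uniformity (Proposition \ref{Prop:monoformuniform}) indeed upgrades every nonzero subobject of a monoform object into an essential one, so that injective hulls can be transported. Once this is in place, the two directions are direct applications of the essentiality criterion for injective hulls in a Grothendieck category.
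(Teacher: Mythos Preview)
Your proof is correct and follows essentially the same approach as the paper: both directions rely on Proposition~\ref{Prop:monoformuniform} to get uniformity, use that a nonzero subobject of a uniform object is essential so that injective hulls agree, and for the converse intersect the images of $H$ and $H'$ inside the common injective hull. The only cosmetic difference is that the paper phrases the converse by noting that $E(H)$ itself is uniform, whereas you invoke essentiality of $H'$ in $E(H')$ directly; these are equivalent observations.
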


\begin{proof}
	Assume that $H$ and $H'$ have a common nonzero subobject $H''$. Since $H$ and $H'$ are uniform by Proposition \ref{Prop:monoformuniform}, we have $E(H)\cong E(H'')\cong E(H')$.
	
	Conversely, assume that $E(H)\cong E(H')$. Then $H$ and $H'$ are nonzero subobjects of $E(H)$. Since $H$ is uniform, $E(H)$ is also uniform. Hence $H$ and $H'$ have a common nonzero subobject.
\end{proof}

According to Lemma \ref{Lem:atomenv}, we can define the {\em injective hull} $\overline{E}(\overline{H})$ of an atom $\overline{H}$ in $\mcA$ by $\overline{E}(\overline{H})=E(H)$. This operation gives a homeomorphism between the atom spectrum of $\mcA$ and the Ziegler spectrum of $\mcA$.

\begin{Thm}\label{Thm:atomspecZg}
	Let $\mcA$ be a locally noetherian Grothendieck category.
	\begin{enumerate}
		\item The map $\overline{E}(\cdot)$ gives a one-to-one correspondence between atoms in $\mcA$ and isomorphism classes of indecomposable injective objects in $\mcA$.
		
		\item The map $\overline{E}(\cdot)$ sends the atom support $\ASupp M$ of a noetherian object $M$ in $\mcA$ to $\mcO(M)$.
	
		\item $\ASpec\mcA$ is homeomorphic to $\Zg\mcA$.
	\end{enumerate}
\end{Thm}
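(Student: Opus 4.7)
\textbf{Plan for Theorem \ref{Thm:atomspecZg}.} My overall strategy is to prove the three parts in order, since (3) essentially assembles the set-theoretic bijection from (1) and the identification of basic opens from (2). For part (1), well-definedness and injectivity of $\overline{E}(\cdot)$ are already handled by Lemma \ref{Lem:atomenv}. For surjectivity I take an arbitrary $I \in \Zg\mcA$ and use local noetherianness to find a nonzero noetherian subobject $M_{0} \subset I$. Applying Theorem \ref{Thm:monoformfiltration} to $M_{0}$ yields a monoform subobject $H \subset M_{0} \subset I$. Because $H$ is essential in $E(H)$, injectivity of $I$ extends the inclusion $H \hookrightarrow I$ to a monomorphism $E(H) \hookrightarrow I$; injectivity of $E(H)$ makes it a direct summand, and indecomposability of $I$ forces $E(H) = I$, so $\overline{E}(\overline{H}) = I$.

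For part (2), I need the equivalence $\overline{H} \in \ASupp M \iff \Hom_{\mcA}(M, E(H)) \neq 0$ for every noetherian $M$. In the forward direction, suppose some $H' \in \overline{H}$ is a subquotient of $M$; by Proposition \ref{Prop:subcats} (1) I may write $H' \hookrightarrow M/N$ for a subobject $N \subset M$. Since $E(H') = E(H)$ is injective, the inclusion $H' \hookrightarrow E(H)$ extends to a morphism $M/N \to E(H)$ that is nonzero on $H'$, and precomposing with $M \twoheadrightarrow M/N$ gives a nonzero element of $\Hom_{\mcA}(M, E(H))$. Conversely, given a nonzero $f \colon M \to E(H)$, I invoke the standard fact that an indecomposable injective in a Grothendieck category is uniform (otherwise two disjoint nonzero subobjects would produce two independent injective direct summands of $E(H)$, contradicting indecomposability). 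Hence $f(M) \cap H \neq 0$, and this intersection is simultaneously a quotient of a subobject of $M$ and a nonzero subobject of $H$, which is monoform by Proposition \ref{Prop:monoformsub} and therefore atom-equivalent to $H$; so $\overline{H} \in \ASupp M$.

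For part (3), parts (1) and (2) together show that $\overline{E}$ is a bijection carrying $\ASupp M$ onto $\mcO(M)$ for every noetherian $M$. Since $\{\mcO(M) \mid M \in \noeth\mcA\}$ is by definition a basis for $\Zg\mcA$, it suffices to check that $\{\ASupp M \mid M \in \noeth\mcA\}$ is a basis for $\ASpec\mcA$. One inclusion is direct: each $\ASupp M$ is open by Proposition \ref{Prop:topology2}, because any $\overline{H} \in \ASupp M$ admits a representative $H'$ that is a subquotient of $M$, and then $\ASupp H' \subset \ASupp M$. For the reverse, given any open $\Phi \subset \ASpec\mcA$ and $\overline{H} \in \Phi$, Proposition \ref{Prop:topology2} supplies an object with atom support between $\overline{H}$ and $\Phi$; passing to a nonzero noetherian subobject of a representative of $\overline{H}$ (available since $\mcA$ is locally noetherian, and still monoform by Proposition \ref{Prop:monoformsub}), I obtain a noetherian witness. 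The main obstacle I anticipate is the backward direction of part (2): it crucially depends on knowing that $E(H)$ is uniform, so I must be careful to invoke (or recall the short argument for) the uniformness of indecomposable injectives in a Grothendieck category. Once that is secured, the rest is formal bookkeeping.
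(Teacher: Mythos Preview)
Your proposal is correct and follows essentially the same approach as the paper's proof: part (1) via Lemma \ref{Lem:atomenv} and Theorem \ref{Thm:monoformfiltration} applied to a noetherian subobject of $I$, part (2) by extending along the quotient and then intersecting the image with $H$ inside the uniform $E(H)$, and part (3) by observing that the sets $\ASupp M$ for noetherian $M$ form a basis (the paper compresses this last step to a bare citation of Proposition \ref{Prop:topology2}). The only cosmetic difference is that you justify the uniformness of $E(H)$ via indecomposability, whereas the paper (implicitly, as in the proof of Lemma \ref{Lem:atomenv}) uses that $H$ is uniform and essential in $E(H)$; either argument is fine.
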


\begin{proof}
	(1) Lemma \ref{Lem:atomenv} shows that $\overline{E}(\cdot)$ is injective. Let $I$ be an indecomposable injective object in $\mcA$. By applying Theorem \ref{Thm:monoformfiltration} to a nonzero noetherian subobject of $I$, we obtain a monoform subobject $H$ of $I$. Then $\overline{E}(\overline{H})=I$, and hence $\overline{E}(\cdot)$ is surjective.
	
	(2) Let $H$ be a monoform object and $I=\overline{E}(\overline{H})$. Assume that $\overline{H}\in\ASupp M$. Then there exists $H'\in\overline{H}$ which is a subobject of a quotient object $N$ of $M$. Since $I$ is injective, there exists a nonzero morphism from $N$ to $I$ such that the diagram
	\begin{eqnarray*}
		\newdir^{ (}{!/-5pt/@^{(}}
		\xymatrix{
			N\ar[dr] & \\
			H\ar@{^{ (}->}[u]\ar@{^{ (}->}[r] & I
		}
	\end{eqnarray*}
	commutes. The nonzero composite $M\twoheadrightarrow N\to I$ implies that $I\in\mcO(M)$.
	
	Conversely, assume that $I\in\mcO(M)$. Then there exists a nonzero morphism from $M$ to $I$. Denote its image by $B$. Since $B$ is nonzero, and $I$ is uniform, by Proposition \ref{Prop:monoformsub}, $B\cap H$ is a monoform subobject of $I$ which is atom-equivalent to $H$. Since $B\cap H$ is a subquotient of $M$, we have $\overline{H}\in\ASupp M$.
	
	(3) By Proposition \ref{Prop:topology2}, $\overline{E}(\cdot)$ is a homeomorphism.
\end{proof}

In the rest of this section, we note that the assumption of noetherian in Proposition \ref{Prop:quotuniform} and Proposition \ref{Prop:monoformsum} can be dropped in the case where $\mcA$ is a Grothendieck category.

\begin{Prop}\label{Prop:generalizedquotuniform}
	Let $M$ be an object in $\mcA$ and $L$ be a uniform subobject of $M$. Then there exists a subobject $N$ of $M$ such that the composite $L\hookrightarrow M\twoheadrightarrow M/N$ is a monomorphism, and $M/N$ is a uniform object.
\end{Prop}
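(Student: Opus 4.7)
The plan is to mimic the proof of Proposition \ref{Prop:quotuniform} almost verbatim; the only place that the noetherian hypothesis was used there is in extracting a maximal element from the family
\[
\mcF=\{M'\in\mcA\mid M'\text{ is a subobject of }M\text{ with }L\cap M'=0\},
\]
and in a Grothendieck category this can be replaced by an application of Zorn's lemma.

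First I would verify the hypotheses of Zorn's lemma. By Proposition \ref{Prop:subsetssmall}(1)(a), the collection of subobjects of $M$ forms a set, so $\mcF$ is a set ordered by inclusion, and it is nonempty since $0\in\mcF$. For a chain $\{M'_{\lambda}\}_{\lambda\in\Lambda}$ in $\mcF$, its sum $\sum_{\lambda\in\Lambda}M'_{\lambda}$ is a subobject of $M$, and every finite subfamily has an upper bound inside the chain, so the Grothendieck (AB5) condition applies and gives
\[
L\cap\sum_{\lambda\in\Lambda}M'_{\lambda}=\sum_{\lambda\in\Lambda}(L\cap M'_{\lambda})=0.
\]
Hence $\sum_{\lambda\in\Lambda}M'_{\lambda}\in\mcF$ is an upper bound for the chain, and Zorn's lemma yields a maximal element $N\in\mcF$.

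With $N$ in hand, the remainder of the argument is identical to that of Proposition \ref{Prop:quotuniform}. The kernel of the composite $L\hookrightarrow M\twoheadrightarrow M/N$ is $L\cap N=0$, so this composite is a monomorphism. To show $M/N$ is uniform, suppose there exist subobjects $N_{1},N_{2}$ of $M$ with $N\subsetneq N_{1}$, $N\subsetneq N_{2}$, and $N_{1}\cap N_{2}=N$. By the maximality of $N$ in $\mcF$, both $L\cap N_{1}$ and $L\cap N_{2}$ are nonzero, but since $L$ is uniform their intersection $(L\cap N_{1})\cap(L\cap N_{2})=L\cap N=0$ is nonzero, a contradiction.

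The only genuinely new step is the verification that the supremum of a chain in $\mcF$ still lies in $\mcF$; this is precisely where the AB5 axiom is needed and is the step that fails in a general abelian category. Once this is in place, there is no additional obstacle.
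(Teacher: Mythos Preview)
Your proposal is correct and follows essentially the same approach as the paper's own proof: both define the family of subobjects $M'$ of $M$ with $L\cap M'=0$, invoke Proposition \ref{Prop:subsetssmall}(1)(a) to see it is a set, use the Grothendieck (AB5) axiom to show that the sum of a chain in this family stays in the family, apply Zorn's lemma to obtain a maximal element $N$, and then finish exactly as in Proposition \ref{Prop:quotuniform}. The only cosmetic difference is that the paper defers the final paragraph entirely to the earlier proof rather than rewriting it.
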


\begin{proof}
	Define a collection $\mcS$ of subsets of $M$ by
	\begin{eqnarray*}
		\mcS=\{M'\in\mcA\ |\ M'\text{ is a subobject of }M\text{ such that }L\cap M'=0\}.
	\end{eqnarray*}
	By Proposition \ref{Prop:subsetssmall} (1) (a), $\mcS$ is a set. Let $\{N_{\lambda}\}_{\lambda\in\Lambda}$ be a totally ordered subset of $\mcS$. Then by the axiom of the Grothendieck category,
	\begin{eqnarray*}
		L\cap\sum_{\lambda\in\Lambda}N_{\lambda}=\sum_{\lambda\in\Lambda}(L\cap N_{\lambda})=0,
	\end{eqnarray*}
	and hence $\sum_{\lambda\in\Lambda}N_{\lambda}$ is an upper bound of $\{N_{\lambda}\}_{\lambda\in\Lambda}$ in $\mcS$. Therefore by Zorn's lemma, there exists a maximal element $N$ in $\mcS$, and the assertion is shown in the same way as the proof of Proposition \ref{Prop:quotuniform}.
\end{proof}

\begin{Prop}\label{Prop:generalizedmonoformsum}
	Let $M$ be a uniform object in $\mcA$ and $H_{1},H_{2}$ be monoform subobjects of $M$. Then $H_{1}+H_{2}$ is a monoform subobject of $M$.
\end{Prop}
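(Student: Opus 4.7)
The plan is to adapt the proof of Proposition \ref{Prop:monoformsum} verbatim, with the sole change that every invocation of the noetherian lemma Proposition \ref{Prop:quotuniform} is replaced by its Grothendieck-category analogue Proposition \ref{Prop:generalizedquotuniform}. The noetherian hypothesis on $M$ was used in that earlier proof \emph{only} to guarantee the existence of a maximal subobject $N$ of $M$ meeting $L$ in zero; now Proposition \ref{Prop:generalizedquotuniform} supplies this for free, so the rest of the argument carries over.

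Concretely, I would argue by contradiction: assume $H_{1}+H_{2}$ is not monoform, so there is a nonzero subobject $N\subset H_{1}+H_{2}$ and a common nonzero subobject $L$ of $H_{1}+H_{2}$ and $(H_{1}+H_{2})/N$. Since $L$ is a subobject of the uniform object $M$, $L$ itself is uniform, so I may apply Proposition \ref{Prop:generalizedquotuniform} inside $(H_{1}+H_{2})/N$ to produce a subobject $N'$ of $H_{1}+H_{2}$ with $N\subset N'$ such that $L$ embeds in the \emph{uniform} quotient $(H_{1}+H_{2})/N'$.

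Next, using the decomposition
\[
\frac{H_{1}+H_{2}}{N'}=\frac{H_{1}+N'}{N'}+\frac{H_{2}+N'}{N'},
\]
I pick $i\in\{1,2\}$ with $(H_{i}+N')/N'\neq 0$ and, by uniformity of $(H_{1}+H_{2})/N'$, replace $L$ by its nonzero intersection with $(H_{i}+N')/N'$; then, using uniformity of $M$ applied to $L\subset H_{1}+H_{2}$ and $L\hookrightarrow (H_{i}+N')/N'\cong H_{i}/(H_{i}\cap N')$, I reduce further to the case $L\subset H_{i}$. Monoformness of $H_{i}$ combined with $L\hookrightarrow H_{i}/(H_{i}\cap N')$ then forces $H_{i}\cap N'=0$, which contradicts the uniformity of $M$ since both $H_{i}$ and $N'\supset N$ are nonzero subobjects of $M$.

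The only genuine obstacle is the first step—producing the uniform quotient $(H_{1}+H_{2})/N'$ without noetherianness—but that is exactly the content of the preceding Proposition \ref{Prop:generalizedquotuniform}, whose proof already incorporated Zorn's lemma and the Grothendieck-axiom identity $L\cap\sum N_{\lambda}=\sum(L\cap N_{\lambda})$. Once that input is available, the bookkeeping with the decomposition $H_{1}+H_{2}=H_{1}+H_{2}$ inside the uniform quotient is purely formal and identical to the noetherian case.
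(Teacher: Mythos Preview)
Your proposal is correct and matches the paper's own proof exactly: the paper simply says to follow the proof of Proposition~\ref{Prop:monoformsum}, using Proposition~\ref{Prop:generalizedquotuniform} in place of Proposition~\ref{Prop:quotuniform}. Your observation that the noetherian hypothesis in the earlier argument was invoked solely through Proposition~\ref{Prop:quotuniform} is precisely the point.
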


\begin{proof}
	This follows from the proof of Proposition \ref{Prop:monoformsum}. Use Proposition \ref{Prop:generalizedquotuniform} instead of Proposition \ref{Prop:quotuniform}.
\end{proof}

\section{In the case of right noetherian rings}
\label{sec:ring}

In this section, for a right noetherian ring $R$, we describe the atom spectrum of $\Mod R$ in terms of right ideals of $R$.

\begin{Def}\label{Def:comonoformrightideal}
	Call a right ideal $\mfp$ of $R$ a {\em comonoform right ideal} of $R$ if $R/\mfp$ is a monoform module.
\end{Def}

Note that any maximal right ideal of $R$ is a comonoform right ideal of $R$.

Denote the atom spectrum of $\Mod R$ by $\ASpec R$, and call it the {\em atom spectrum} of $R$. For a comonoform right ideal $\mfp$ of $R$, denote the atom $\overline{R/\mfp}$ by $\widetilde{\mfp}$.

\begin{Prop}
	$\ASpec R=\{\widetilde{\mfp}\ |\ \mfp\text{ is a comonoform right ideal of }R\}$.
\end{Prop}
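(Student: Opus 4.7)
The plan is to prove the two inclusions separately.

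The inclusion $\supseteq$ is immediate from the definitions. If $\mfp$ is a comonoform right ideal, then by Definition \ref{Def:comonoformrightideal} the module $R/\mfp$ is monoform, hence $\widetilde{\mfp} = \overline{R/\mfp}$ is a well-defined element of $\ASpec R$.

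For the nontrivial inclusion $\subseteq$, I start with an arbitrary atom $\alpha \in \ASpec R$ and choose a monoform representative $H$, so that $\alpha = \overline{H}$. The task is to produce a comonoform right ideal $\mfp$ with $\overline{R/\mfp} = \overline{H}$, i.e.\ such that $H$ and $R/\mfp$ share a common nonzero submodule. The natural candidate is obtained from any cyclic submodule of $H$: pick a nonzero element $h \in H$ and set $\mfp = \{r \in R \mid hr = 0\}$, the right annihilator of $h$. Then the cyclic submodule $hR \subseteq H$ is isomorphic to $R/\mfp$ via $r + \mfp \mapsto hr$.

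It remains to verify that $\mfp$ is comonoform, i.e.\ that $R/\mfp$ is monoform. This is exactly Proposition \ref{Prop:monoformsub}: since $hR$ is a nonzero submodule of the monoform module $H$, it is itself monoform, and hence so is the isomorphic module $R/\mfp$. Finally, the isomorphism $R/\mfp \cong hR$ exhibits a common nonzero submodule of $R/\mfp$ and $H$, so $\widetilde{\mfp} = \overline{R/\mfp} = \overline{H} = \alpha$ by Definition \ref{Def:atomequivalence}.

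There is no real obstacle here; the argument is just the combination of the cyclicity of principal submodules of $R$-modules with the hereditary property of monoformness under nonzero submodules. The only thing one must remember to invoke is that monoformness passes to nonzero submodules, which is Proposition \ref{Prop:monoformsub}.
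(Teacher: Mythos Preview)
Your proof is correct and follows essentially the same approach as the paper: pick a nonzero element of a monoform representative, observe that its cyclic submodule is isomorphic to $R/\Ann_R(h)$, and use Proposition \ref{Prop:monoformsub} to conclude that the annihilator is comonoform. The paper's proof is slightly terser (it omits the trivial inclusion and the explicit verification of atom-equivalence), but the argument is the same.
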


\begin{proof}
	Any monoform module $H$ has a nonzero submodule generated by one element $x\in H$. Then $xR$ is isomorphic to $R/\Ann_{R}(x)$. Since $xR$ is also monoform by Proposition \ref{Prop:monoformsub}, $\Ann_{R}(x)$ is a comonoform right ideal of $R$.
\end{proof}

Then we obtain the following corollary.

\begin{Cor}\label{Cor:ringcorresp}
	Let $R$ be a right noetherian ring. Then there exist one-to-one correspondences between localizing subcategories of $\Mod R$, Serre subcategories of $\mod R$, and open subsets of $\ASpec R$.
\end{Cor}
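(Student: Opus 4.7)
The plan is to obtain this corollary as an immediate specialization of Theorem \ref{Thm:locSerreatom} to $\mcA = \Mod R$, so the work reduces to verifying the hypothesis and translating terminology.

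First I would check that $\Mod R$ is a locally noetherian Grothendieck category. The module category has arbitrary direct sums, the regular module $R_R$ is a generator, and the Grothendieck axiom (filtered sums of submodules commute with intersection against a fixed submodule) holds in any module category. The right noetherian hypothesis on $R$ makes $R_R$ itself a noetherian object, so $\{R\}$ is a set of noetherian generators, and $\Mod R$ is locally noetherian.

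Next I would match the three collections appearing in Theorem \ref{Thm:locSerreatom} with the three collections in the statement. Over a right noetherian ring, a module is noetherian if and only if it is finitely generated, so $\noeth(\Mod R) = \mod R$. By the definition given at the start of Section \ref{sec:ring}, $\ASpec R$ is by construction the atom spectrum $\ASpec(\Mod R)$. With these identifications, Theorem \ref{Thm:locSerreatom} applied to $\mcA = \Mod R$ produces exactly the three bijections asserted in the corollary.

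There is no real obstacle; the content of the corollary is essentially a dictionary between general categorical language and ring-theoretic language. The only point that deserves an explicit mention is the double use of the right noetherian hypothesis: once to ensure that $R_R$ is a noetherian generator (so that $\Mod R$ qualifies as locally noetherian), and once to identify $\noeth(\Mod R)$ with $\mod R$ so that the middle term of the correspondence matches the statement.
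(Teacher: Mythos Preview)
Your proposal is correct and follows exactly the paper's approach: the paper's proof is simply ``Apply Theorem~\ref{Thm:locSerreatom} to $\Mod R$.'' You have merely spelled out the verifications (that $\Mod R$ is locally noetherian Grothendieck, that $\noeth(\Mod R)=\mod R$, and that $\ASpec R=\ASpec(\Mod R)$) that the paper leaves implicit.
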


\begin{proof}
	Apply Theorem \ref{Thm:locSerreatom} to $\Mod R$.
\end{proof}

Now we compare comonoform right ideals to completely prime right ideals in \cite{Reyes}.

\begin{Def}
	A right ideal $I\subsetneq R$ is called a {\em completely prime right ideal} of $R$ if for any $a,b\in R$, $aI\subset I$ and $ab\in I$ imply $a\in I$ or $b\in I$.
\end{Def}

The following proposition is shown in \cite{Reyes}.

\begin{Prop}\label{Prop:comonoformcp}
	Any comonoform right ideal of $R$ is a completely prime right ideal of $R$.
\end{Prop}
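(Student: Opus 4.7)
The plan is to exploit the condition $a\mfp\subset\mfp$ to build a right $R$-module endomorphism of $R/\mfp$ given by left multiplication by $a$, and then to use the monoform property of $R/\mfp$ to force a dichotomy on this endomorphism.

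Concretely, let $\mfp$ be a comonoform right ideal of $R$ and suppose $a,b\in R$ satisfy $a\mfp\subset\mfp$ and $ab\in\mfp$. First I would observe that the map $\lambda_{a}\colon R\to R$, $r\mapsto ar$, is a morphism of right $R$-modules, and that the inclusion $a\mfp\subset\mfp$ guarantees that it descends to a right $R$-module homomorphism $\overline{\lambda}_{a}\colon R/\mfp\to R/\mfp$. The equation $ab\in\mfp$ then reads $\overline{\lambda}_{a}(b+\mfp)=0$, i.e.\ $b+\mfp\in\Ker\overline{\lambda}_{a}$.

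Next I would apply the monoform hypothesis on $R/\mfp$ to $N:=\Ker\overline{\lambda}_{a}$. The image $\Im\overline{\lambda}_{a}$ is a subobject of $R/\mfp$ which is isomorphic to $(R/\mfp)/N$, so it is simultaneously a subobject of $R/\mfp$ and of $(R/\mfp)/N$. By Definition \ref{Def:monoform} this forces either $N=0$ or $\Im\overline{\lambda}_{a}=0$. In the first case $\overline{\lambda}_{a}$ is a monomorphism, and from $\overline{\lambda}_{a}(b+\mfp)=0$ we conclude $b\in\mfp$. In the second case $\overline{\lambda}_{a}=0$, so in particular $a=a\cdot 1\in\mfp$. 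Either way, $a\in\mfp$ or $b\in\mfp$, and the remaining condition $\mfp\subsetneq R$ follows from the fact that $R/\mfp$ is nonzero (monoform objects being nonzero by definition).

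There is no real obstacle here; the only point that needs care is the well-definedness of $\overline{\lambda}_{a}$, which is precisely the role played by the hypothesis $a\mfp\subset\mfp$ in the definition of a completely prime right ideal. Everything else is a direct application of the definition of monoform object.
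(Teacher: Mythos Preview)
Your proof is correct and follows essentially the same approach as the paper: define the right $R$-module endomorphism of $R/\mfp$ given by left multiplication by $a$ (well-defined thanks to $a\mfp\subset\mfp$), and then use the monoform property to conclude that its kernel or its image must vanish. The only cosmetic difference is that the paper phrases the argument by contradiction (assuming $a\notin\mfp$ and $b\notin\mfp$ to produce a common nonzero subobject of $R/\mfp$ and a proper quotient), whereas you argue directly.
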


\begin{proof}
	Let $\mfp$ be a comonoform right ideal of $R$. Assume that $\mfp$ is not a completely prime right ideal of $R$. Then there exist $a,b\in R$ such that $a\mfp\subset\mfp$, $ab\in \mfp$, $a\not\in\mfp$, and $b\not\in\mfp$. Define a morphism $f:R/\mfp\to R/\mfp$ by $f(c+\mfp)=ac+\mfp$. $a\mfp\subset\mfp$ implies that $f$ is well-defined. Since $a\not\in\mfp$, the image of $f$ is a nonzero submodule of $R/\mfp$. Since $b\not\in\mfp$, and $ab\in\mfp$, the kernel of $f$ is a nonzero submodule of $R/\mfp$, and hence it is of the form $I/\mfp$, where $I$ is a right ideal of $R$ which satisfies $\mfp\subsetneq I$. Then we have
	\begin{eqnarray*}
		\frac{R}{\mfp}\hookleftarrow \Im f\cong \frac{R/\mfp}{\Ker f}\cong \frac{R}{I},
	\end{eqnarray*}
	and this contradicts the fact that $R/\mfp$ is a monoform module. Therefore $\mfp$ is a completely prime right ideal of $R$.
\end{proof}

The converse of Proposition \ref{Prop:comonoformcp} fails. In fact, it is observed in \cite{Reyes} that even if $I$ is a completely prime right ideal of $R$, $R/I$ is not necessarily a uniform module. For any comonoform right ideal $\mfp$, $R/\mfp$ is, however, a uniform module. This is an advantage of considering comonoform right ideals.

On the other hand, the following fact shows that there exist ``sufficiently many'' comonoform right ideals.

\begin{Cor}\label{Cor:comonoformfiltration}
	Let $R$ be a right noetherian ring. Then for any finitely generated module $M$, there exist a filtration
	\begin{eqnarray*}
		0=L_{0}\subset L_{1}\subset\cdots\subset L_{n}=M
	\end{eqnarray*}
	and comonoform right ideals $\mfp_{1},\ldots,\mfp_{n}$ of $R$ such that $L_{i}/L_{i-1}\cong R/\mfp_{i}$ for any $i=1,\ldots,n$.
\end{Cor}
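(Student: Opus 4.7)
The plan is to proceed by a noetherian induction, using Theorem \ref{Thm:monoformfiltration} to produce monoform subfactors and Proposition \ref{Prop:monoformsub} to cut them down to cyclic ones of the form $R/\mfp$. Since $R$ is right noetherian and $M$ is finitely generated, $M$ is a noetherian object of $\Mod R$, so the noetherian property on submodules is available. Rather than invoking Theorem \ref{Thm:monoformfiltration} directly on $M$ (whose resulting factors might not be cyclic), I would build up the desired filtration one factor at a time via a maximality argument on submodules of $M$.

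More precisely, let $\mcS$ denote the collection of submodules $N \subset M$ admitting a filtration of the required form, i.e.\ with successive factors isomorphic to $R/\mfp_i$ for comonoform right ideals $\mfp_i$ of $R$; this collection is nonempty since $0 \in \mcS$ with the empty filtration, and by the noetherian property of $M$ it has a maximal element $N$. The goal reduces to showing $N = M$. Assuming for contradiction that $N \subsetneq M$, I would apply Theorem \ref{Thm:monoformfiltration} to the nonzero noetherian module $M/N$ to obtain a monoform submodule $H \subset M/N$. Picking any nonzero element $\bar x \in H$, the cyclic submodule $\bar x R \subset H$ is monoform by Proposition \ref{Prop:monoformsub}, and the natural surjection $R \twoheadrightarrow \bar x R$, $1 \mapsto \bar x$, identifies $\bar x R$ with $R/\mfp$, where $\mfp = \Ann_R(\bar x)$; by definition of comonoformness, $\mfp$ is then a comonoform right ideal of $R$.

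Taking $N'$ to be the preimage of $\bar x R$ in $M$ under the quotient map $M \twoheadrightarrow M/N$, I obtain a submodule with $N \subsetneq N'$ and $N'/N \cong R/\mfp$. Appending the single step $N \subset N'$ to the filtration witnessing $N \in \mcS$ yields a filtration of $N'$ of the required form, placing $N'$ in $\mcS$ and contradicting the maximality of $N$. The only conceptually interesting point is the refinement from an abstract monoform subobject of $M/N$ to a cyclic one, which is what forces the factors to have the form $R/\mfp$ rather than arbitrary monoform modules; since any cyclic submodule of a monoform module is itself monoform by Proposition \ref{Prop:monoformsub}, this step is immediate, and no serious obstacle arises.
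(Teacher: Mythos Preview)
Your proof is correct and follows essentially the same approach as the paper: both arguments use Theorem \ref{Thm:monoformfiltration} to find a monoform submodule of the current quotient, then Proposition \ref{Prop:monoformsub} to replace it by a cyclic (hence $R/\mfp$-shaped) monoform submodule, and invoke the noetherian property of $M$ to guarantee termination. The only cosmetic difference is that the paper builds the filtration iteratively and observes it must stop, whereas you phrase this as a maximality argument on the set $\mcS$; these are equivalent uses of noetherianity.
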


\begin{proof}
	Assume that for some $k\in\mbZ_{\geq 0}$, there exist a sequence
	\begin{eqnarray*}
		0=L_{0}\subset L_{1}\subset\cdots\subset L_{k}\subsetneq M
	\end{eqnarray*}
	of submodules of $M$ and comonoform right ideals $\mfp_{1},\ldots,\mfp_{k}$ of $R$ such that $L_{i}/L_{i-1}\cong R/\mfp_{i}$ for any $i=1,\ldots,k$. Then by Theorem \ref{Thm:monoformfiltration}, $M/L_{k}$ has a monoform submodule $L_{k+1}/L_{k}$. By Proposition \ref{Prop:monoformsub}, we can assume that $L_{k+1}/L_{k}$ is generated by one element, and hence $L_{k+1}/L_{k}\cong R/\mfp_{k+1}$ for some comonoform right ideal $\mfp_{k+1}$ of $R$. By repeating these operations, we obtain $n\in\mbZ_{\geq 0}$ such that $L_{n}=M$ since $M$ is noetherian.
\end{proof}

\section{In the case of commutative noetherian rings}
\label{sec:commutative}

In this section, let $R$ be a commutative noetherian ring. We show that the atom spectrum coincides with the prime spectrum as a set. A {\em comonoform ideal} means a comonoform right ideal of a commutative ring. 

\begin{Prop}
	An ideal of $R$ is a comonoform ideal of $R$ if and only if it is a prime ideal of $R$.
\end{Prop}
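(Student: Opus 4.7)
The plan is to prove both directions using the results already in the paper, with the reverse direction relying on an elementary annihilator computation that exploits commutativity.

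For the forward direction ($\mathfrak{p}$ comonoform implies $\mathfrak{p}$ prime), I would invoke Proposition \ref{Prop:comonoformcp} to conclude that $\mathfrak{p}$ is a completely prime right ideal of $R$. Recall that this means: for all $a,b \in R$, if $a\mathfrak{p}\subseteq\mathfrak{p}$ and $ab\in\mathfrak{p}$, then $a\in\mathfrak{p}$ or $b\in\mathfrak{p}$. Since $R$ is commutative, every ideal is automatically closed under left multiplication by arbitrary elements, so the hypothesis $a\mathfrak{p}\subseteq\mathfrak{p}$ is vacuous. Thus the condition collapses to $ab\in\mathfrak{p}\Rightarrow a\in\mathfrak{p}$ or $b\in\mathfrak{p}$, which is the definition of a prime ideal.

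For the reverse direction ($\mathfrak{p}$ prime implies $R/\mathfrak{p}$ monoform), let $\mathfrak{p}$ be prime, so $R/\mathfrak{p}$ is an integral domain. Suppose for contradiction that there exists a nonzero submodule $N$ of $R/\mathfrak{p}$ such that $R/\mathfrak{p}$ and $(R/\mathfrak{p})/N$ share a common nonzero submodule $L$. Writing $N = I/\mathfrak{p}$ with $\mathfrak{p}\subsetneq I$, we have $(R/\mathfrak{p})/N \cong R/I$. I would then compute annihilators: since $L$ embeds into $R/\mathfrak{p}$ and $R/\mathfrak{p}$ is a domain, multiplication by any $a\in R\setminus\mathfrak{p}$ is injective on $R/\mathfrak{p}$ and hence on $L$, so $\Ann_R(L)=\mathfrak{p}$. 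On the other hand, $L$ embeds into $R/I$, so $\Ann_R(L)\supseteq I \supsetneq \mathfrak{p}$, a contradiction. Hence no such $N$ exists and $R/\mathfrak{p}$ is monoform.

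I do not anticipate a serious obstacle; the only point that requires a small amount of care is the annihilator computation in the reverse direction, namely verifying that $\Ann_R(L)$ is exactly $\mathfrak{p}$ rather than something strictly larger. This is straightforward once one uses that $R/\mathfrak{p}$ is a domain and that $L$ is a \emph{nonzero} submodule, so the argument is clean.
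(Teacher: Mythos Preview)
Your proposal is correct and follows essentially the same approach as the paper: both directions match, with the forward direction invoking Proposition~\ref{Prop:comonoformcp} and the reverse direction deriving a contradiction via an annihilator computation over the domain $R/\mathfrak{p}$. The only cosmetic difference is that the paper picks a single nonzero element of the common subobject and compares $\Ann_R$ of that element on each side, whereas you compute $\Ann_R(L)$ for the entire module; the content is identical.
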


\begin{proof}
	Any comonoform ideal of $R$ is a prime ideal of $R$ by Proposition \ref{Prop:comonoformcp}. Let $\mfp$ be a prime ideal of $R$. If $\mfp$ is not a comonoform ideal of $R$, there exist ideals $I$, $J$, and $J'$ of $R$ such that $\mfp\subsetneq I$, $\mfp\subsetneq J\subsetneq J'$, and $I/\mfp\cong J'/J$. Let $a+\mfp$ be a nonzero element of $I/\mfp$, and take the corresponding element $b+J\in J'/J$. Since $\mfp$ is a prime ideal of $R$, we have $\Ann_{R}(a+\mfp)=\mfp$. On the other hand, we have $J\subset\Ann_{R}(b+J)$. This contradicts $\mfp\subsetneq J$. Therefore $\mfp$ is a comonoform ideal of $R$.
\end{proof}

We have an equivalence relation between comonoform right ideals, which is induced by the atom equivalence. If $R$ is commutative, it is in fact a trivial relation. We say that a subset $\Phi$ of $\Spec R$ is {\em closed under specialization} if for any $\mfp,\mfq\in\Spec R$, $\mfp\subset\mfq$ and $\mfp\in\Phi$ imply $\mfq\in\Phi$.

\begin{Prop}\label{Prop:commatom}
	\begin{enumerate}
		\item For any comonoform ideals $\mfp$ and $\mfq$, it holds that $\widetilde{\mfp}=\widetilde{\mfq}$ if and only if $\mfp=\mfq$. Hence $\ASpec R=\Spec R$ as a set.
		
		\item A subset $\Phi$ of $\ASpec R$ is open if and only if $\Phi$ is specialization-closed subset of $\Spec R$.
	\end{enumerate}
\end{Prop}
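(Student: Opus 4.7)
My plan is to reduce both parts to the explicit computation $\ASupp(R/\mfp)=V(\mfp)$ for a prime ideal $\mfp$, where $V(\mfp)=\{\mfq\in\Spec R\ |\ \mfq\supset\mfp\}$, exploiting that each $R/\mfp$ is an integral domain.

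For part (1), the map $\mfp\mapsto\widetilde{\mfp}$ from $\Spec R$ to $\ASpec R$ is surjective: every atom is represented by $R/\mfp$ for some comonoform right ideal $\mfp$ by the result of Section \ref{sec:ring}, and the previous proposition identifies comonoform ideals with primes in the commutative setting. For injectivity, suppose $\widetilde{\mfp}=\widetilde{\mfq}$; then $R/\mfp$ and $R/\mfq$ share a common nonzero submodule $L$, and any nonzero $x\in L$ has annihilator $\mfp$ when viewed inside $R/\mfp$ and annihilator $\mfq$ when viewed inside $R/\mfq$ (using that both are domains), forcing $\mfp=\mfq$.

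The key lemma for part (2) is $\ASupp(R/\mfp)=V(\mfp)$. The inclusion $V(\mfp)\subset\ASupp(R/\mfp)$ is immediate, since each prime $\mfq\supset\mfp$ gives a surjection $R/\mfp\twoheadrightarrow R/\mfq$ onto a monoform module. Conversely, if $\widetilde{\mfq}\in\ASupp(R/\mfp)$, choose a representative $H'$ which is a subquotient of $R/\mfp$, so $H'\hookrightarrow R/J$ for some $J\supset\mfp$; then selecting a common nonzero submodule $K$ of $H'$ and $R/\mfq$ and a nonzero element $x\in K$, the annihilator of $x$ equals $\mfq$ when read inside $R/\mfq$ (a domain) and contains $J$ when read inside $R/J$, forcing $\mfp\subset J\subset\mfq$.

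Given this lemma, part (2) follows quickly. If $\Phi$ is open in $\ASpec R$ and $\mfp\in\Phi$, pick $H'\in\widetilde{\mfp}$ with $\ASupp H'\subset\Phi$ and a common nonzero submodule $L$ of $H'$ and $R/\mfp$; writing $L=I/\mfp$ with $\mfp\subsetneq I$ and multiplying by any $a\in I\setminus\mfp$ embeds $R/\mfp$ into $L$ (since $\mfp$ is prime), so $V(\mfp)=\ASupp(R/\mfp)\subset\ASupp L\subset\ASupp H'\subset\Phi$, confirming that $\Phi$ is specialization-closed. Conversely, if $\Phi$ is specialization-closed and $\mfp\in\Phi$, the witness $H'=R/\mfp$ satisfies $\ASupp H'=V(\mfp)\subset\Phi$, so $\Phi$ is open by Definition \ref{Def:topology}. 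I expect the main obstacle to be the careful annihilator bookkeeping in the $\subset$ direction of the key lemma, where $x$ must be simultaneously tracked as an element of two different ambient modules.
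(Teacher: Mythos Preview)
Your proposal is correct and follows essentially the same route as the paper. Both proofs rest on the annihilator observation that every nonzero element of $R/\mfp$ has annihilator exactly $\mfp$, which yields part (1) and the key computation $\ASupp(R/\mfp)=V(\mfp)$; the paper compresses this into the single remark that every nonzero cyclic submodule of $R/\mfp$ is isomorphic to $R/\mfp$, whereas you spell out both inclusions and both directions of (2) explicitly, but the underlying argument is the same.
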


\begin{proof}
	(1) For any nonzero element $a+\mfp$ of $R/\mfp$, we have $\Ann_{R}(a+\mfp)=\mfp$. Hence there exists no common nonzero submodule of $R/\mfp$ and $R/\mfq$ if $\mfp\neq\mfq$.
	
	(2) According to (1), we identify the atom $\widetilde{\mfp}$ with $\mfp$. By Proposition \ref{Prop:monoformsub} and (1), for each comonoform ideal $\mfp$ of $R$, any nonzero submodule of $R/\mfp$ generated by one element is isomorphic to $R/\mfp$. Therefore
	\begin{eqnarray*}
		\ASupp\frac{R}{\mfp}=\{\mfq\in\Spec R\ |\ \mfp\subset\mfq\}=\Supp\frac{R}{\mfp}.
	\end{eqnarray*}
\end{proof}

In particular, we obtain the following Gabriel's theorem.

\begin{Cor}[Gabriel \cite{Gabriel}]\label{Cor:commringcorresp}
	Let $R$ be a commutative noetherian ring. Then there exist one-to-one correspondences between localizing subcategories of $\Mod R$, Serre subcategories of $\mod R$, and specialization-closed subsets of $\Spec R$.
\end{Cor}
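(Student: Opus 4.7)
The plan is to simply combine the general classification obtained in Theorem \ref{Thm:locSerreatom} with the identification of the atom spectrum and its topology in the commutative case that was carried out in Proposition \ref{Prop:commatom}. Since $R$ is commutative noetherian, the category $\Mod R$ is a locally noetherian Grothendieck category with $\noeth(\Mod R)=\mod R$, so Theorem \ref{Thm:locSerreatom} applies directly and yields bijections between localizing subcategories of $\Mod R$, Serre subcategories of $\mod R$, and open subsets of $\ASpec R$.

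Next I would invoke Proposition \ref{Prop:commatom}(1) to identify $\ASpec R$ with $\Spec R$ as a set via $\widetilde{\mfp}\mapsto\mfp$, and Proposition \ref{Prop:commatom}(2) to identify the collection of open subsets of $\ASpec R$ with the collection of specialization-closed subsets of $\Spec R$. Composing these identifications with the bijections coming from Theorem \ref{Thm:locSerreatom} produces the desired one-to-one correspondences.

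In this argument essentially all of the work has already been done: the content lies in Theorem \ref{Thm:main}, in the passage to the locally noetherian setting via Proposition \ref{Prop:Serreloc} and Proposition \ref{Prop:noethatom}, and in Proposition \ref{Prop:commatom}. There is no real obstacle to overcome at this stage; the only thing to verify is that the bijections can be composed, which is automatic because both identifications in Proposition \ref{Prop:commatom} are natural and compatible with the assignment $\mcX\mapsto\ASupp\mcX$ and its inverse $\Phi\mapsto\ASupp^{-1}\Phi$ from Theorem \ref{Thm:main}. Thus the proof reduces to citing these three ingredients in sequence.
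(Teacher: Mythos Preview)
Your proposal is correct and essentially matches the paper's own proof, which simply cites Corollary~\ref{Cor:ringcorresp} (itself just Theorem~\ref{Thm:locSerreatom} applied to $\Mod R$) together with Proposition~\ref{Prop:commatom}. The only cosmetic difference is that you invoke Theorem~\ref{Thm:locSerreatom} directly rather than via the intermediate Corollary~\ref{Cor:ringcorresp}.
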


\begin{proof}
	This is immediate from Corollary \ref{Cor:ringcorresp} and Proposition \ref{Prop:commatom}.
\end{proof}

\begin{rem}
	It follows from Theorem \ref{Thm:atomspecZg} that the topology on $\Spec R$ of the atom spectrum coincides with the Ziegler spectrum. Furthermore, as in \cite{GarkushaPrest}, it also coincides with the Hochster dual of the Zariski topology on $\Spec R$.
\end{rem}

For any module $M$, by the definition, $\AAss M=\Ass M$. We show that $\ASupp M=\Supp M$. Note that $\ASupp (R/\mfp)=\Supp (R/\mfp)$ for any $\mfp\in\Spec R$. By Corollary \ref{Cor:comonoformfiltration} and Proposition \ref{Prop:suppexact}, we have $\ASupp L=\Supp L$ for any finitely generated module $L$. For an arbitrary module $M$, there exists a family $\{L_{\lambda}\}_{\lambda\in\Lambda}$ of finitely generated submodule of $M$ such that $M=\sum_{\lambda\in\Lambda}L_{\lambda}$. By Proposition \ref{Prop:suppexact} and Proposition \ref{Prop:suppassdirectsum},
\begin{eqnarray*}
	\bigcup_{\lambda\in\Lambda}\ASupp L_{\lambda}\subset\ASupp\sum_{\lambda\in\Lambda}L_{\lambda}\subset\ASupp\bigoplus_{\lambda\in\Lambda}L_{\lambda}=\bigcup_{\lambda\in\Lambda}\ASupp L_{\lambda}.
\end{eqnarray*}
Therefore
\begin{eqnarray*}
	\ASupp M=\bigcup_{\lambda\in\Lambda}\ASupp L_{\lambda}=\bigcup_{\lambda\in\Lambda}\Supp L_{\lambda}=\Supp M.
\end{eqnarray*}

\section{In the case of right artinian rings}
\label{sec:artinian}

In this section, let $R$ be a right artinian ring. In this case, we can obtain an explicit description of $\ASpec R$. In the following proposition, we give a characterization of monoform modules.

\begin{Prop}\label{Prop:artinianmonoform}
	A finitely generated module $M$ is a monoform module if and only if it has a simple socle $S$, and there exists no other composition factor of $M$ which is isomorphic to $S$.
\end{Prop}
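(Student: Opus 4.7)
The plan is to exploit the fact that $M$, being finitely generated over a right artinian ring, has finite length, so we can reason via socles and Jordan--Hölder. The key common input to both directions is: if $M$ has simple socle $S$, then \emph{every} nonzero submodule of $M$ has socle $S$ (since its socle is a nonzero submodule of $S$, which is simple), hence contains $S$ as a composition factor.

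For the ``if'' direction, assume $M$ has simple socle $S$ appearing only once in any composition series of $M$. Let $N$ be a nonzero submodule of $M$, and suppose for contradiction that $L$ is a nonzero common subobject of $M$ and of $M/N$. Since $L$ embeds in $M$, the observation above gives $S$ as a composition factor of $L$, and hence of $M/N$ as well. On the other hand $N$ is itself a nonzero submodule of $M$, so it also has $S$ as a composition factor. By Jordan--Hölder, $S$ therefore appears at least twice in the composition series of $M$, contradicting the hypothesis. Hence no such $L$ exists and $M$ is monoform.

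For the ``only if'' direction, assume $M$ is monoform. By Proposition \ref{Prop:monoformuniform} $M$ is uniform, and since $M$ has finite length its socle is nonzero; a uniform module with nonzero socle has simple socle, so $\text{soc}(M) = S$ is simple. Now suppose for contradiction that $S$ occurs as a composition factor of $M$ more than once, and fix a composition series $0 = M_0 \subsetneq M_1 \subsetneq \cdots \subsetneq M_n = M$ with $M_1 = S$. Choose the smallest $i \geq 2$ with $M_i / M_{i-1} \cong S$, and set $N = M_{i-1}$, which is nonzero. The point is that $M_i/M_{i-1}$ sits canonically as a submodule of $M/M_{i-1} = M/N$, so $S$ embeds in $M/N$; but $S$ also embeds in $M$ as its socle. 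Thus $S$ is a common nonzero subobject of $M$ and $M/N$, contradicting monoformness.

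There is no real obstacle here: once one notices that $M_i/M_{i-1}$ is naturally a submodule (not merely a subquotient) of $M/M_{i-1}$, the second direction becomes immediate, and the first is a direct Jordan--Hölder count. The only care needed is the standing hypothesis that $M$ is finitely generated over an artinian ring, which is what guarantees finite length and hence a nonzero socle in both directions.
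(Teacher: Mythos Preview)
Your proof is correct and follows essentially the same approach as the paper's: both directions hinge on the observation that every nonzero submodule of $M$ has $S$ as its socle, combined with a Jordan--H\"older count. The paper's version is terser (in the ``if'' direction it simply notes that $M/N$ can have no submodule isomorphic to $S$, and in the ``only if'' direction it passes through $M/S$ rather than fixing a full composition series), but the underlying argument is the same.
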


\begin{proof}
	Assume that $M$ is monoform. Since $R$ is right artinian, the socle $S$ of $M$ is nonzero. By Proposition \ref{Prop:monoformuniform}, $M$ is a uniform module, and hence $S$ is simple. If $M$ has another composition factor which is isomorphic to $S$, so does $M/S$. Then there exists a submodule $N$ of $M$ such that $S\subset N$, and $M/N$ has a submodule which is isomorphic to $S$. This contradicts the fact that $M$ is a monoform module. Therefore $M$ does not have another composition factor which is isomorphic to $S$.
	
	Conversely, assume that $M$ has a simple socle $S$ and that there exists no other composition factor of $M$ which is isomorphic to $S$. Any nonzero submodule $N$ of $M$ contains $S$, and hence $M/N$ does not have a submodule isomorphic to $S$. This means that $M$ is a monoform module.
\end{proof}

Note that monoform modules $H$ and $H'$ are atom-equivalent if and only if their simple socles are isomorphic to each other.

\begin{Prop}\label{Prop:artinian}
	Let $\mfm_{1},\ldots,\mfm_{n}$ be maximal right ideals of $R$ such that
	\begin{eqnarray*}
		\left\{\frac{R}{\mfm_{1}},\ldots,\frac{R}{\mfm_{n}}\right\}
	\end{eqnarray*}
	is a maximal set of simple modules which are pairwise non-isomorphic. Then
	\begin{eqnarray*}
		\ASpec R=\{\widetilde{\mfm_{1}},\ldots,\widetilde{\mfm_{n}}\}
	\end{eqnarray*}
	with the discrete topology, and $\widetilde{\mfm_{i}}\neq\widetilde{\mfm_{j}}$ if $i\neq j$.
\end{Prop}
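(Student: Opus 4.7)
The plan is to use Proposition \ref{Prop:artinianmonoform} and the remark following it as the main reduction, so that every atom is represented by a simple module, and then just check distinctness and openness of singletons directly.

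First I would establish the set equality. Let $\overline{H} \in \ASpec R$. By Proposition \ref{Prop:artinianmonoform}, $H$ has a simple socle $S$, and by the remark just before the proposition, $\overline{H}$ depends only on the isomorphism class of $S$. Since $S$ itself is simple (hence monoform), we have $\overline{H} = \overline{S}$. Now $S$ is a simple module, so there exists a maximal right ideal $\mfn$ of $R$ with $S \cong R/\mfn$. By hypothesis $\{R/\mfm_1, \ldots, R/\mfm_n\}$ is a maximal set of pairwise non-isomorphic simples, so $R/\mfn \cong R/\mfm_i$ for some $i$, giving $\overline{H} = \widetilde{\mfm_i}$.

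Next I would check that the $\widetilde{\mfm_i}$ are pairwise distinct. If $\widetilde{\mfm_i} = \widetilde{\mfm_j}$, then by Definition \ref{Def:atomequivalence}, $R/\mfm_i$ and $R/\mfm_j$ share a common nonzero submodule. Since both are simple, this forces $R/\mfm_i \cong R/\mfm_j$, contradicting the assumed pairwise non-isomorphism. Hence $i = j$.

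Finally, for the discrete topology, I would show that each singleton $\{\widetilde{\mfm_i}\}$ is open. By Proposition \ref{Prop:topology2} it suffices to exhibit a module $M$ with $\widetilde{\mfm_i} \in \ASupp M \subset \{\widetilde{\mfm_i}\}$. Take $M = R/\mfm_i$; every nonzero subquotient of a simple module is isomorphic to itself, so $\ASupp(R/\mfm_i) = \{\widetilde{\mfm_i}\}$, and the required inclusion is immediate. Since $\ASpec R$ is finite and every singleton is open, the topology is discrete.

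No step is really an obstacle here: the substantive work is entirely contained in Proposition \ref{Prop:artinianmonoform} and the identification of atom equivalence with isomorphism of simple socles. The only mild care needed is to remember that the hypothesis provides every isomorphism class of simples exactly once, so both the surjectivity onto $\ASpec R$ and the injectivity of $\mfm_i \mapsto \widetilde{\mfm_i}$ are clean consequences.
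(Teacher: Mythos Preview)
Your proof is correct and follows essentially the same approach as the paper's: both reduce every atom to the class of a simple socle (via Proposition~\ref{Prop:artinianmonoform} and the remark after it) and compute $\ASupp(R/\mfm_i)=\{\widetilde{\mfm_i}\}$ to get discreteness. One tiny technical point: Proposition~\ref{Prop:artinianmonoform} is stated for finitely generated modules, so when you pick a representative $H$ of an atom you should first replace it by a nonzero finitely generated (hence monoform, by Proposition~\ref{Prop:monoformsub}) submodule before invoking it; this is harmless and the paper's own proof glosses over the same point.
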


\begin{proof}
	Any maximal right ideal is a comonoform right ideal, and any monoform module is atom-equivalent to its simple socle. $\ASupp(R/\mfm_{i})=\{\widetilde{\mfm_{i}}\}$.
\end{proof}

\begin{ex}\label{ex:lowertri}
	Let $R$ be the ring of lower triangular matrices over a field $K$, that is,
	\begin{eqnarray*}
		R=\begin{bmatrix} K & 0 \\ K & K \end{bmatrix}.
	\end{eqnarray*}
	Then all the right ideals of $R$ are
	\begin{eqnarray*}
		0, \begin{bmatrix} 0 & 0 \\ K & 0 \end{bmatrix}, \mfp_{a}=K\begin{bmatrix} 1 & 0 \\ a & 0 \end{bmatrix} (a\in K), \mfm_{1}=\begin{bmatrix} 0 & 0 \\ K & K \end{bmatrix}, \mfm_{2}=\begin{bmatrix} K & 0 \\ K & 0 \end{bmatrix}, R.
	\end{eqnarray*}
	All the comonoform right ideals of $R$ are
	\begin{eqnarray*}
		\mfp_{a} (a\in K), \mfm_{1}, \mfm_{2}.
	\end{eqnarray*}
	Since
	\begin{eqnarray*}
		\frac{R}{\mfp_{a}}\cong\begin{bmatrix} K & K \end{bmatrix}, \frac{R}{\mfm_{1}}\cong\begin{bmatrix} K & 0 \end{bmatrix}, \frac{R}{\mfm_{2}}\cong\frac{\begin{bmatrix} K & K \end{bmatrix}}{\begin{bmatrix} K & 0 \end{bmatrix}},
	\end{eqnarray*}
	we have $\widetilde{\mfp_{a}}=\widetilde{\mfm_{1}}\neq\widetilde{\mfm_{2}}$. Therefore all the Serre subcategories of $\mod R$ are $\{\text{zero objects}\}$, $\Serre{R/\mfm_{1}}$, $\Serre{R/\mfm_{2}}$, and $\mod R$.
\end{ex}

\section{Acknowledgments}

The author would like to express his deep gratitude to his supervisor Osamu Iyama for his elaborated guidance. The author thanks Ryo Takahashi for his valuable comments.



\end{document}